  \tikzset{multiple arrows/.style={multiple arrows path/.style={#1},
decoration=multiple arrows, decorate}}
\numberwithin{equation}{section} \newtheorem{theorem}{Theorem}
\numberwithin{theorem}{section}
\newtheorem{proposition}[theorem]{Proposition}
\newtheorem{example}[theorem]{Example} \newtheorem{lemma}[theorem]{Lemma}
\newtheorem{corollary}[theorem]{Corollary}
\newtheorem{definition}[theorem]{Definition}
\newtheorem{problem}[theorem]{Problem} \newtheorem{question}[theorem]{Question}
\newcommand{\Z}{\mathbb{Z}} \newcommand{\calO}{\mathcal{O}}
\DeclareMathOperator{\GL}{GL} 
 \DeclareMathOperator{\minor}{minor}
\DeclareMathOperator{\sign}{sign}
 \newcommand{\R}{\mathbb{R}}
\newcommand{\N}{\mathbb{N}}
\DeclareMathOperator{\Vol}{Vol}
\newenvironment{myenumerate}{ \vspace{-3pt} \begin{enumerate}
  \setlength{\itemsep}{0pt} \setlength{\parskip}{0pt}
    \setlength{\parsep}{-8pt}} {\end{enumerate} \vspace{-3pt} }
\begin{document}

\title{
Sparse Polynomial Systems with many Positive Solutions from Bipartite
Simplicial Complexes
} \author[1]{Fr\'ed\'eric Bihan and Pierre-Jean Spaenlehauer}

\date{}

\maketitle

\begin{abstract}

Consider a regular
triangulation of the convex-hull $P$ of a set $\mathcal A$ of $n$ points in
$\R^d$, and a real matrix $C$ of size $d \times n$. A version of Viro's
method allows to construct from these data an unmixed polynomial system with
support $\mathcal A$ and coefficient matrix $C$ whose number of positive solutions is bounded from below by the number of $d$-simplices which are \emph{positively decorated} by $C$.
We show that all the $d$-simplices of a triangulation can be positively
decorated if and only if the triangulation is \emph{balanced}, which in turn is equivalent to
the fact that its dual graph is \emph{bipartite}.  This allows us to identify,
among classical families, monomial
supports which admit
\emph{maximally positive systems}, \emph{i.e.} systems all toric complex
solutions of which are real and positive. These families give some evidence in favor of a
conjecture due to Bihan.  We also use this technique in order to construct fewnomial 
systems with many positive solutions. This is done by considering a
simplicial complex with bipartite dual graph included in a regular
triangulation of the cyclic polytope.
\end{abstract}

\section{Introduction}

Real solutions of multivariate polynomial systems are central objects in many
areas of mathematics. Positive solutions (\emph{i.e.}
solutions all coordinates of which are real and positive) are of special interest as
they contain meaningful information in several applications, \emph{e.g.} 
robotics, optimization, algebraic statistics, etc.
In the 70s,
foundational results by Kouchnirenko~\cite{Kouch}, Khovanskii \cite{Khov} and
Bernshtein \cite{Bern} have laid theoretical ground for the study of
the algebraic structure of sparse polynomial equations in strong connection
with the development of toric and tropical
geometry.
These breakthroughs opened the door to computational techniques for sparse
elimination \cite{St4, HSt, CanEmi, St2, FSS}.

Let $\mathcal A\subset \Z^d$ be a finite point configuration.  We consider
unmixed sparse polynomial systems $f_1(X_1,\ldots, X_d)=\dots = f_d(X_1,\ldots,
X_d) = 0$ with support $\mathcal A$. This means that $\mathcal A$ coincides
with the set of exponent vectors $\mathbf a$ of the monomials $X^{\mathbf a}$
appearing in each equation.  Kouchnirenko's theorem \cite{Kouch} states that the
number of toric complex solutions (no coordinate is zero) which are
non-degenerate (the Jacobian matrix of the system is invertible at the
solution) is bounded by the normalized volume of the convex hull of~$\mathcal
A$.

Viro's method \cite{Vir} (see also \cite{R, St3,B} for instance) is one of the
roots of tropical geometry and has been used with great success for
constructing real algebraic varieties with interesting topological types.  It
allows to recover under certain conditions the topological type for $t$ close
to $0$ of a real algebraic variety defined by a system whose coefficients
depend polynomially on a positive parameter $t$.
Here we apply a version of Viro's method which has already been used in
\cite{St}.
Given any finite configuration $\mathcal A=\{{\mathbf a_1,\ldots, \mathbf
a_n}\} \subset \Z^d$, where $n=\lvert\mathcal A\rvert$, an unmixed real
polynomial system with support $\mathcal A$ can be written as $C \cdot
(X^{\mathbf a_1}, \ldots, X^{\mathbf a_n})^T=0$, where $C$ is a real matrix of
size $d\times n$ called \emph{coefficient matrix}. Given a regular
triangulation of the convex-hull of $\mathcal A$ associated with a height
function $h: {\mathcal A} \mapsto \R$, we look at the deformed system $C \cdot
(t^{h(\mathbf a_1)}x^{\mathbf a_1}, \ldots, t^{h(\mathbf a_n)}x^{\mathbf
a_n})^T=0$.
For $t>0$ sufficiently small, the number of complex (resp., real, positive)
toric solutions of this deformed system is at least the total number of complex
(resp., real, positive) toric solutions of the sub-systems obtained by
truncating the initial system to the $d$-simplices of the triangulation.  We
note that as far as we are only concerned with positive solutions, this
construction works in the same manner if we allow real exponent vectors
\emph{i.e.} if $\mathcal A \subset \R^d$.  If the triangulation is unimodular,
which means that all $d$-simplices have normalized volume one, then all these
sub-systems are linear up to monomial changes of coordinates.  Generically, a
real linear system has one complex toric solution, which is in fact a real
solution.  Since the number of $d$-simplices in any unimodular triangulation of
the convex-hull of $\mathcal A$ is equal to its normalized volume, this
construction produces polynomial systems whose all toric complex solutions are
real \cite[Corollary 2.4]{St}.

One goal of the present paper was to analyze under which conditions this
construction produces polynomial systems whose all toric complex solutions are
positive.  Such polynomial systems are called \emph{maximally positive} in
\cite{B}, where a conjecture about their supports has been proposed
\cite[Conjecture 0.6]{B}.

\smallskip

{\bf Main results.} Consider a regular full-dimensional pure simplicial complex supported on a point configuration  $\mathcal A=\{{\mathbf a_1,\ldots, \mathbf a_n}\} \subset\R^d$ and a coefficient matrix $C$ of size $d\times n $ which
encodes a map $\mathcal A\rightarrow \R^d$. We call a $d$-simplex of this simplicial complex
\emph{positively decorated} by $C$ if the kernel of a $d\times(d+1)$
submatrix of $C$ corresponding to this simplex contains vectors all coordinates
of which are positive. This condition can be read off from the signs of
maximal minors of $C$. Said otherwise, the simplices that are positively
decorated can be identified on the oriented matroid associated to $C$.
Moreover, our construction produces a polynomial system whose number of
positive solutions is at least
the number of $d$-simplices of a regular triangulation of the convex-hull of $\mathcal A$
which are positively decorated by $C$. Our first observation is that any simplicial complex
supported on $\mathcal A$ whose all $d$-simplices are positively decorated has a dual graph
which is bipartite. This leads us to investigate three types of full-dimensional pure simplicial complexes: \emph{balanced} simplicial complexes have the property that their set of
vertices 
is $(d+1)$-coloriable (two adjacent vertices have different colors)
\cite[Section III.4]{Sta3}; \emph{positively decorated} simplicial complexes are
characterized by the existence of a coefficient matrix which positively
decorates all their $d$-simplices; finally, \emph{bipartite}
simplicial complexes are characterized by the property that their dual graphs are bipartite. We shall
see that balancedness implies that the complex can be positively decorated, which in turn implies that the complex
is bipartite. For triangulations, these three properties are equivalent.
Consequently, if a point configuration $\mathcal A$ admits a regular, unimodular
and balanced triangulation, then our construction produces maximally positive
polynomial systems with support $\mathcal A$. 


In order to illustrate this result,
we check that some classical families of polytopes, namely order polytopes,
hypersimplices, cross polytopes and alcoved polytopes, admit regular unimodular balanced triangulations and
thus provide point configurations supporting maximally positive systems. 
As a by-product, we verify that these classes
of point configurations have a basis of affine relations with coefficients in
$\{-2,-1,1,2\}$: this gives evidence in favor of Bihan's conjecture
\cite[Conjecture 0.6]{B}.

Interesting computational problems arise from this analysis: if $\Gamma$ is a
full-dimensional pure simplicial complex in $\R^d$ whose dual graph is connected, deciding
if it is
balanced or if its dual graph is bipartite is computationally easy. However,
deciding if it is positively decorable seems to be a nontrivial problem, which
can be restated as deciding the existence of a realizable oriented matroid
verifying conditions given by the combinatorial structure of the complex.
We also show that the problem of decorating a simplicial complex can be recasted as a low-rank matrix completion
problem with positivity constraints.

\medskip

We apply our results to the problem of constructing fewnomial systems with many positive solutions comparatively to their number of monomials. Let $d$ be the number of variables (and equations) and $d+k+1$ the total number of monomials of a polynomial system.
As a particular case of more general bounds, Khovanskii \cite{Khov} obtained an upper bound on the number of (non-degenerate) positive solutions of such a system which depends only on $d$ and $k$. This bound was later improved by Bihan and Sottile \cite{BS} to some constant times $2^{\binom{k}2}d^k$.
When $k\gg d$, taking $d$ univariate polynomials with distinct variables provides a construction with many positive solutions, while for $d\gg k$  the record construction is due to Bihan, Rojas and Sottile \cite{BRS}.  We focus here in the case $k=d$, where the best construction so far
is to consider a system with $d$ quadratic univariate equations with distinct
variables, yielding $2^d$ positive solutions.
By considering a subsimplicial complex of a regular triangulation of the cyclic
polytope, we construct a pure simplicial complex of dimension $d$ on $2d+1$
vertices whose dual graph is bipartite. Its number of $d$-simplices grows as
$O((1+\sqrt 2)^d/\sqrt d)$. Consequently, if this simplicial complex is
positively decorable, then there exists a system with at least that many
positive solutions. For $d=1,3,5$, we compute explicitly such
decorations and we ask the question whether they exist for any $d$. 
In particular, for $d=5$ this yields a system with $11$ monomials and $38$ positive
solutions.

\medskip

{\bf Related works.} Configurations of points $\mathcal A$ that support
maximally positive systems (systems such that all toric complex solutions are
positive) have been characterized when $\mathcal A$ is the set of vertices of a
simplex (see \emph{e.g.} \cite{B2}) or when $\mathcal A$ is a circuit, see
\cite{B}. When $\mathcal A$ is any finite subset of $\Z$, it follows from
Descartes' rule of signs that $\mathcal A$ should coincide with the
intersection of its convex-hull with $\Z$.  Based on these characterizations,
Bihan conjectured that if $\mathcal A\subset\Z^d$ is the support of a maximally
positive polynomial system, then there is a basis of affine relations for
$\mathcal A$ with coefficients at most $2$ in absolute value~\cite{B}. By
\cite[Lemma 7.6]{MilStu}, this is equivalent to saying that the homogeneous
toric ideal $I_{\mathcal A}$ associated to $\mathcal A$ can be written
$I_{\mathcal A} = J:\langle X_1\cdots X_n\rangle^\infty$, where $J$ is
generated by binomials with exponents at most $2$.  Balanced regular
triangulations have also been used in \cite{SS} in order to get lower bounds
for the number of real solutions of Wronski polynomial systems.  Namely, given
a regular balanced triangulation of a convex polytope in $\R^d$, a Wronski
polynomial system is an unmixed polynomial system where all polynomials have
the form $\sum_{i=0}^d c_i \,\varphi_i(X)$ with $c_0,\ldots,c_d \in \R$ and
$\varphi_i(X)$ is a linear combination with positive coefficients of monomials
with given color $i$. Since the triangulation is bipartite, all its
$d$-simplices get a sign $\pm$ so that two adjacent $d$-simplices have opposite
signs. Soprunova and Sottile showed in \cite{SS} that under certain conditions
on the polytope the absolute value of the difference between the number of
positive and negative odd normalized volume $d$-simplices of the triangulation
provides a lower bound on the number of real solutions of any Wronski
polynomial system associated to this triangulation. In fact, Sottile informed
us that the existence of maximally positive Wronski polynomial systems when the
support admits a regular balanced unimodular triangulation follows from
\cite[Lemma 3.9]{SS}. Notice that our construction can be used to produce maximally positive
systems which are not Wronski polynomial systems. In general, triangulations
need not be balanced, but under some conditions, they admit a minimal branched
balanced covering. This has been investigated by Izmestiev and Joswig
\cite{Izmes, IzJ} for combinatorial $3$-manifolds.  The connection between the
oriented matroid defined by the matrix of coefficients and the number of
positive solutions has been investigated by M\"uller et al. in \cite[Theorem
1.5]{MulDic}, where they give a sufficient condition on this oriented matroid
for a sparse system to have at most one positive solution.

{\bf Organization of the paper.} Section \ref{sec:oriented} focuses on
simplices and describes the construction of Viro's system and its relation with
the oriented matroid associated to a given coefficient matrix $C$. In Section
\ref{sec:dualgraph}, the relationship between balanced, positively decorated
and bipartite simplicial complexes is investigated.  Section
\ref{sec:maxpos} focuses on unimodular and regular triangulations of
classical polytopes, and we identify classes of polytopes for which there
exists such a triangulation which is balanced, yielding construction of 
maximally positive polynomial systems.  In Section
\ref{sec:cyclic}, we focus on the cyclic polytope and we propose a
construction of a bipartite subsimplicial complex with many simplices. We finish in Section \ref{sec:comput} by
relating the problem of positive decorability of a simplicial complex with two
computational problems: the existence of realizable oriented matroids
satisfying a specific condition, and the low-rank matrix completion problem
with extra positivity constraints.

\paragraph{Acknowledgements.} We are grateful to Alin Bostan and Louis
Dumont for their help with the computation of diagonals of bivariate series 
and asymptotic estimates of the growth of their coefficients. We also thank
\'Eric Schost, Frank Sottile and Bernd Sturmfels for helpful discussions.

\section{Positively decorated simplices}\label{sec:oriented}

We start by focusing on systems of $d$ equations in $d$ variables involving
$d+1$ monomials. This case corresponds to simplices and they shall serve as building
blocks which will be glued to form simplicial complexes. Such systems are equivalent
to linear systems up to a monomial map and
the positivity of their solution can be read off from the
signs of the maximal minors of the matrix recording the coefficients of the
system.

\begin{definition} A $d\times (d+1)$ matrix $M$ with real entries is called
  \emph{oriented} if all the values $(-1)^i{\minor}(M,i)$ are nonzero and have 
  the same sign, where $\minor(M,i)$ is the determinant of the square matrix
  obtained by removing the $i$-th column.  \end{definition}

The terminology ``oriented'' follows from the fact that for $d=2$, the signs of
the minors determine orientations of the edges of a $2$-dimensional simplex
compatible with an orientation of the plane (see Figure \ref{fig:color}).
The
following proposition is elementary, but it plays a central role in the sequel
of the paper.

\begin{proposition} \label{P:invariant} Let $M$ be a full rank $d\times (d+1)$
matrix with real entries. Then the following statements are equivalent:
\begin{myenumerate} \item the matrix $M$ is oriented; 
  \item for any
      $A\in\GL_n(\R)$, $A\cdot M$ is an oriented matrix; \item for any
      permutation matrix $P\in\mathfrak S_{d+1}$, $M\cdot P$ is an oriented
    matrix; \item all the coordinates of any non-zero vector in the kernel of
      the matrix are non-zero and share the same sign;  \item there exists $i
      \in \{1,\ldots,d+1\}$ such that the $i$-th column vector of $M$ belongs
    to the interior of the negative cone generated by the other column vectors
  of $M$;  \item for any $i \in \{1,\ldots,d+1\}$, the $i$-th column vector of
    $M$ belongs to the interior of the negative cone generated by the other
    column vectors of $M$.  \end{myenumerate} \end{proposition}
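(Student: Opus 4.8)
The engine of the whole proposition is Cramer's rule for the kernel of a full rank $d\times(d+1)$ matrix. The plan is first to record that $\ker M$ is the line spanned by the vector $v$ whose coordinates are, up to a common nonzero scalar, the numbers $v_i=(-1)^i\minor(M,i)$. To see this I would adjoin any row $r$ of $M$ on top of $M$ and expand the resulting $(d+1)\times(d+1)$ determinant along the new row, getting $\sum_i(-1)^i M_{ri}\minor(M,i)=0$ because that matrix has a repeated row; this says exactly that the $r$-th entry of $Mv$ vanishes, and letting $r$ range over all rows gives $Mv=0$, with $v\neq 0$ since $\operatorname{rank}M=d$. Granting this, statement (1) says precisely that every coordinate of $v$ is nonzero and that all coordinates share one sign, which is statement (4) verbatim; so (1)$\Leftrightarrow$(4).

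Next I would derive (2) and (3) from (4) via the behaviour of the kernel. If $A$ is invertible then $AM$ again has rank $d$ and $\ker(AM)=\ker M$, so the kernel vector of $AM$ is proportional to $v$ and therefore inherits the property ``all coordinates nonzero of one sign'' from $v$; hence (4), equivalently (1), holds for $AM$, which is (2). Likewise, for a permutation matrix $P$ corresponding to $\sigma\in\mathfrak S_{d+1}$, the matrix $MP$ has rank $d$ with $\ker(MP)=P^{-1}\ker M$, so its kernel vector is, up to scalar, $v$ with coordinates permuted by $\sigma$; since ``all coordinates nonzero of one sign'' is permutation invariant, (3) follows. Conversely (2)$\Rightarrow$(1) and (3)$\Rightarrow$(1) by specialising $A=I_d$ and $P=I_{d+1}$. (One could instead prove (3) directly by tracking the signs of the $\minor(M,j)$ under an adjacent transposition of columns, but routing through the kernel is shorter and unifies (2), (3), (4).)

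Finally I would treat the cone statements. Writing $M_1,\dots,M_{d+1}$ for the columns, the assertion that $M_i$ lies in the interior of the negative cone generated by $\{M_j:j\neq i\}$ bundles two facts: that cone is full-dimensional, i.e.\ the $d$ vectors $\{M_j:j\neq i\}$ are linearly independent (equivalently $\minor(M,i)\neq 0$, i.e.\ $v_i\neq 0$), and $M_i=\sum_{j\neq i}\lambda_j(-M_j)$ with all $\lambda_j>0$. The latter rewrites as $M_i+\sum_{j\neq i}\lambda_j M_j=0$, a nonzero element of $\ker M=\R v$ with all coordinates positive, forcing (4). Thus (5)$\Rightarrow$(4), and (6)$\Rightarrow$(5) is immediate. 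Conversely, assuming (4) and rescaling so that $v$ has all coordinates positive, the relation $\sum_j v_j M_j=0$ gives, for every $i$, $M_i=\sum_{j\neq i}(v_j/v_i)(-M_j)$ with all $v_j/v_i>0$; and $v_i\neq 0$ makes $\{M_j:j\neq i\}$ linearly independent, so its negative cone is a full-dimensional simplicial cone whose interior is exactly the set of strictly positive combinations of the $-M_j$. Hence $M_i$ lies in that interior for every $i$, which is (6), closing the cycle.

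The only genuinely delicate points are the sign bookkeeping in the Cramer description of $v$ (so that ``oriented'' in the sense of the Definition really matches the sign pattern of $v$ — note this is robust to the indexing convention, since changing $(-1)^i$ to $(-1)^{i+1}$ only flips $v$ globally) and, for (5)–(6), the remark that interiority of the negative cone is precisely what simultaneously encodes the strict positivity of the $\lambda_j$ and the non-vanishing of the matching minor. Once these are pinned down the rest is routine.
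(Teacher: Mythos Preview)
Your proof is correct and follows essentially the same approach as the paper: Cramer's rule for $(1)\Leftrightarrow(4)$, the kernel/permutation description for $(2)$ and $(3)$, and the rewriting $M_i=\sum_{j\neq i}(-v_j/v_i)M_j$ for $(4)\Leftrightarrow(5)\Leftrightarrow(6)$. The only cosmetic difference is that the paper proves $(1)\Rightarrow(2)$ directly from $\sign((-1)^i\minor(AM,i))=\sign(\det A)\cdot\sign((-1)^i\minor(M,i))$, whereas you route it through $\ker(AM)=\ker M$; both are equally short.
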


\begin{proof} The equivalence $(1)\Leftrightarrow (4)$ follows from Cramer's
  rule. $(2)\Rightarrow (1)$ and $(3)\Rightarrow (1)$ are proved directly by
  instanciating $A$ and $P$ to the identity matrix.  $(1)\Rightarrow (2)$
  follows from $$\sign((-1)^i{\minor}(A\cdot M,i)) = \sign(\det(A))\cdot
  \sign((-1)^i{\minor}(M,i)).$$ $(3) \Leftrightarrow (4)$ is a consequence of
  the fact that permuting the columns of $M$ is equivalent to permuting the
  coordinates of the kernel vectors. Finally, the equivalence between $(4)$,
  $(5)$ and $(6)$ is obvious once we have noticed that a positive vector $(x_1,\ldots,x_{d+1})$
  belongs to the kernel of $M$ if and only if $\mathbf m_i=\sum_{j \neq i}
  -\frac{x_j}{x_i}\mathbf m_j$ assuming $x_i \neq 0$, where $\mathbf
  m_1,\ldots, \mathbf m_{d+1}$ are
  the column vectors of $M$.  \end{proof}

We let $X$ denote the set of variables $\{X_1,\ldots, X_d\}$. A
solution $\mathbf v=(v_1,\ldots, v_d)$ of a system $f_1(X) = \dots = f_d(X) =
0$ is called \emph{non-degenerate} if all the functions $f_i$ are~$C^1$ at
$\mathbf v$ and the Jacobian matrix of $(f_1,\ldots, f_d)$ is invertible at
$\mathbf v$. Throughout the paper, $\mathcal A=\{\mathbf a_1,\ldots, \mathbf
  a_{n}\}\subset \Z^d$ denotes a
finite point configuration, and the coordinates of these points are
recorded in a $d\times n$ matrix $A$ (we assume
that an ordering of the points has been arbitrarily fixed). We let
$\widetilde A$ denote the matrix obtained by adding a first row whose
entries are all $1$. For $\mathbf a=(a_1,\ldots, a_d)\in\Z^d$, the
shorthand $X^{\mathbf a}$ stands for the monomial $X_1^{a_1}\dots
X_d^{a_d}$. 

\begin{proposition}\label{prop:posorthantsimplex}
Let $\mathcal A=\{\mathbf a_1,\ldots, \mathbf a_{d+1}\}\subset\Z^d$ and  
$$f_i(X)=\sum_{j=1}^{d+1}C_{i,j} X^{\mathbf a_j},\quad 1\leq i\leq d$$
be a system of $d$ Laurent polynomials with real coefficients involving $d+1$ monomials.
If $\widetilde A$ is invertible,
then the system $f_1(X)=\dots=f_d(X)=0$
 has one non-degenerate solution in the positive orthant if and only if the $d\times (d+1)$ matrix $C$ recording the coefficients of the system is oriented. 
\end{proposition}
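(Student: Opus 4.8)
The plan is to translate the statement about positive solutions of the polynomial system into a statement about positive vectors in $\ker C$ via the monomial parametrization, and then to apply Proposition~\ref{P:invariant}. Introduce the monomial map $\mu\colon(\R_{>0})^d\to(\R_{>0})^{d+1}$ sending $\mathbf v$ to $(\mathbf v^{\mathbf a_1},\ldots,\mathbf v^{\mathbf a_{d+1}})$; taking coordinatewise logarithms identifies $\mu$ with the linear map $\mathbf w\mapsto A^T\mathbf w$. Since $\widetilde A$ is invertible, $A$ has rank $d$, so this linear map is injective, and the all-ones vector $\mathbf 1$ does not belong to $V:=\operatorname{im}(A^T)$ (the row space of $A$); hence $\R^{d+1}=V\oplus\R\mathbf 1$. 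A point $\mathbf v$ of the positive orthant solves the system if and only if the positive vector $\mathbf y:=\mu(\mathbf v)$ lies in $\ker C$, and $\mu$ is a bijection onto $\{\mathbf y\in(\R_{>0})^{d+1}:\log\mathbf y\in V\}$.

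For the ``if'' direction, assume $C$ is oriented. Then $C$ has full rank (all its maximal minors are nonzero), so Proposition~\ref{P:invariant}~$(1)\Leftrightarrow(4)$ gives that $\ker C$ is spanned by a vector $\mathbf w$ with all coordinates positive. Every positive $\mathbf y\in\ker C$ equals $\lambda\mathbf w$ for some $\lambda>0$, and $\log\mathbf y=\log\mathbf w+(\log\lambda)\mathbf 1$; the decomposition $\R^{d+1}=V\oplus\R\mathbf 1$ then shows there is exactly one value of $\lambda>0$ for which $\log\mathbf y\in V$. Pulling this back through the bijection $\mu$ produces a unique solution $\mathbf v$ in the positive orthant. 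To check non-degeneracy, a direct computation gives that the Jacobian matrix of $(f_1,\ldots,f_d)$ at $\mathbf v$ equals $C\,D_{\mathbf y}\,A^T\,D_{\mathbf v}^{-1}$, where $D_{\mathbf y}$ and $D_{\mathbf v}$ are the diagonal matrices with diagonals $\mathbf y$ and $\mathbf v$. If $\mathbf u\in\ker(C D_{\mathbf y}A^T)$, then $D_{\mathbf y}A^T\mathbf u\in\ker C=\R\mathbf y=\R\,D_{\mathbf y}\mathbf 1$, whence $A^T\mathbf u\in\R\mathbf 1$; since $\mathbf 1\notin V=\operatorname{im}(A^T)$ this forces $A^T\mathbf u=0$, and injectivity of $A^T$ gives $\mathbf u=0$. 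So the Jacobian is invertible.

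For the ``only if'' direction, suppose the system has a non-degenerate solution $\mathbf v$ in the positive orthant, and set $\mathbf y:=\mu(\mathbf v)\in\ker C$, a positive vector. Using the same formula for the Jacobian: were $C$ not of full rank, a nonzero left-kernel vector $\mathbf c$ of $C$ would satisfy $\mathbf c^T C D_{\mathbf y}A^T=0$, contradicting invertibility of the Jacobian; hence $C$ has full rank and $\ker C$ is the line $\R\mathbf y$, spanned by a positive vector. Proposition~\ref{P:invariant}~$(4)\Rightarrow(1)$ then shows $C$ is oriented.

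The only slightly delicate point is the bookkeeping around the Jacobian: writing it correctly as $C D_{\mathbf y}A^T D_{\mathbf v}^{-1}$ and then extracting from its (non)invertibility both the full-rank statement for $C$ used in the ``only if'' part and the argument with $\mathbf 1\notin\operatorname{im}(A^T)$ used for non-degeneracy in the ``if'' part. Everything else is a formal consequence of Proposition~\ref{P:invariant} together with the fact that invertibility of $\widetilde A$ is exactly affine independence of $\mathbf a_1,\ldots,\mathbf a_{d+1}$.
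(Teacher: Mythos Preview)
Your argument is correct and rests on the same idea as the paper's proof: linearize via the monomial map and then invoke Proposition~\ref{P:invariant}. The paper does this by extending to a monomial bijection $\mu_{\widetilde A}$ of $\R_+^{d+1}$ (using the invertibility of $\widetilde A$) and reducing the system to the affine linear system $\ell_1(1,X)=\dots=\ell_d(1,X)=0$, after which Cramer's rule gives the equivalence with $C$ being oriented in one stroke. You instead work with the non-surjective map $\mu:\R_+^d\to\R_+^{d+1}$ and analyze its image in log coordinates through the splitting $\R^{d+1}=V\oplus\R\mathbf 1$; this is the same geometry viewed from the codomain rather than via a change of variables in an enlarged domain. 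The main thing your write-up adds is an explicit treatment of non-degeneracy through the Jacobian factorization $C D_{\mathbf y}A^T D_{\mathbf v}^{-1}$, which the paper leaves implicit (relying on the fact that $\mu_{\widetilde A}$ is a diffeomorphism and that the linear solution is non-degenerate when the relevant minor of $C$ is nonzero). Your use of non-degeneracy in the ``only if'' direction to force $C$ to have full rank is also a point the paper's appeal to Cramer's rule glosses over.
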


\begin{proof}
To any invertible $(d+1)\times (d+1)$ real matrix $S$ with columns $(\mathbf
s_1,\ldots,\mathbf s_{d+1})$, 
we associate the bijection of the positive orthant
$$\begin{array}{rccc}
  \mu_S:&\R_+^{d+1}&\rightarrow&\R_+^{d+1}\\
  & \mathbf x&\mapsto&(\mathbf x^{\mathbf s_1},\ldots, \mathbf x^{\mathbf s_{d+1}}).
\end{array}$$
Its inverse map is $\mu_{S^{-1}}$. Let $\ell_1(X_0,\ldots,
X_{d})=\dots=\ell_d(X_0,\ldots, X_{d})=0$ be the linear system defined by
$$\ell_i(X_0,\ldots, X_{d})=\sum_{j=0}^{d}C_{i,j+1} X_{j}.$$
By Cramer's rule, this system has a solution in the positive orthant if and only if the 
matrix $C$ is oriented. Since $(\ell_i\circ\mu_{\widetilde
A})(1, X_1,\ldots,
X_d)=f_i(X)$, the positive solutions 
of $f_1(X)=\dots=f_d(X)=0$ are in bijection with those of $\ell_1(1,X_1,\ldots,
X_d)= \dots = \ell_d(1, X_1,\ldots, X_d)=0$.
\end{proof}

Let $\mathcal A=\{\mathbf a_1,\ldots,\mathbf a_n\} \subset\R^d$ be a finite point configuration, and assume that the convex-hull of $\mathcal A$ is a full-dimensional polytope
$Q$. Let $(\Gamma,
\nu)$ be a regular triangulation of the convex hull of $\mathcal A$
\emph{i.e.}  $\Gamma$ is a triangulation and $\nu$ is a convex
function, linear on each simplex of $\Gamma$, but not linear on the union of two different maximal simplices of $\Gamma$.
Regular triangulations are sometimes called coherent or convex in the literature. 
Let $C$ be a $d\times n$ matrix with real entries. We say that $C$ \emph{positively decorates} a $d$-simplex $\Delta={\rm
conv}(\mathbf a_{i_1},\ldots,\mathbf a_{i_{d+1}})\in\Gamma$ if the $d\times(d+1)$ 
submatrix of $C$ given by its columns $\{i_1,\ldots, i_{d+1}\}$ is oriented.

Consider the following family of polynomial systems parametrized by a
positive real number $t$:
\begin{equation}
\label{E:Virosystem}
f_{1,t}(X)=\cdots=f_{d,t}(X)=0,
\end{equation}
where
$$f_{i,t}(X)=\sum_{j=1}^n C_{ij}t^{\nu(\mathbf a_j)}X^{\mathbf a_j} \in
\R[X_1,\ldots,X_d] , \quad i=1,\ldots,d,\quad t>0.$$
For each positive real value of $t$, this system has support included in $\mathcal A$.

\begin{theorem}\label{thm:nbpossols}
There exists $t_0\in\R_+$ such that for all $0<t<t_0$ the number
of non-degenerate solutions of \eqref{E:Virosystem} contained in the positive orthant
is bounded from below by the number of maximal simplices in $\Gamma$ which are
positively decorated by $C$.
\end{theorem}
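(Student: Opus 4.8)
The plan is to use a standard Viro-type deformation argument: show that as $t \to 0^+$, the system \eqref{E:Virosystem}, after restriction to a suitable neighborhood of each positively decorated simplex, degenerates to the truncated subsystem supported on that simplex, and that the latter has exactly one non-degenerate positive solution by Proposition~\ref{prop:posorthantsimplex}. First I would pass to logarithmic coordinates and rescale: for a maximal simplex $\Delta = \mathrm{conv}(\mathbf a_{i_1},\ldots,\mathbf a_{i_{d+1}})$ of $\Gamma$, the regularity of the triangulation (encoded by the convex lifting $\nu$) singles out $\Delta$ as the domain where the lower hull is attained by the affine function agreeing with $\nu$ on $\Delta$. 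Performing the monomial substitution $X^{\mathbf a} \mapsto t^{-\ell_\Delta(\mathbf a)} Y^{\mathbf a}$, where $\ell_\Delta$ is that affine function, normalizes the exponents of $t$ so that the monomials $X^{\mathbf a_{i_1}},\ldots,X^{\mathbf a_{i_{d+1}}}$ all carry $t^0$ while every other monomial carries a positive power of $t$ (this is exactly the defining property of regular triangulations: $\nu(\mathbf a_j) - \ell_\Delta(\mathbf a_j) > 0$ for $\mathbf a_j \notin \Delta$, and strict by the "not linear on the union of two maximal simplices" hypothesis). Thus, on a fixed compact region of the torus $(\R_{>0})^d$ in the rescaled coordinates, the system converges uniformly (together with its first derivatives) as $t\to 0^+$ to the truncated system $\sum_{k=1}^{d+1} C_{i,i_k} Y^{\mathbf a_{i_k}} = 0$.

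Next I would invoke Proposition~\ref{prop:posorthantsimplex}: since $\Delta$ is positively decorated, the $d\times(d+1)$ submatrix of $C$ indexed by $\{i_1,\ldots,i_{d+1}\}$ is oriented, and $\widetilde A$ restricted to those columns is invertible because $\Delta$ is a genuine $d$-simplex; hence the truncated system has exactly one non-degenerate solution $\mathbf v_\Delta$ in the positive orthant. Choose a small closed ball $B_\Delta$ around $\mathbf v_\Delta$ (in the rescaled log-coordinates) on which the Jacobian of the limit system is invertible and $\mathbf v_\Delta$ is the unique zero. By the uniform $C^1$ convergence on $B_\Delta$, for $t$ small enough the perturbed (rescaled) system has a solution in $B_\Delta$; a standard argument using the implicit function theorem / degree theory (e.g. the nonvanishing of the Jacobian determinant on $\partial B_\Delta$ and a homotopy to the limit map) shows that this solution is unique in $B_\Delta$ and non-degenerate. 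Undoing the monomial substitution $\mu$, which is a diffeomorphism of $(\R_{>0})^d$ (invertibility of $\widetilde A$), transports this back to a non-degenerate positive solution of the original system \eqref{E:Virosystem}.

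Finally I would argue that distinct positively decorated simplices contribute distinct solutions. For two different maximal simplices $\Delta \neq \Delta'$, the affine functions $\ell_\Delta$ and $\ell_{\Delta'}$ differ, so the two rescalings $\mu_\Delta, \mu_{\Delta'}$ differ by a monomial map with a nontrivial power of $t$; as $t\to 0^+$, the images of the fixed balls $\mu_\Delta(B_\Delta)$ and $\mu_{\Delta'}(B_{\Delta'})$ in the original coordinates separate (they lie in regions of the positive orthant at different scales, governed by which simplex of $\Gamma$'s lower hull is active), so for $t$ below a common threshold $t_0$ (take the minimum of the finitely many thresholds over all positively decorated simplices) the corresponding solutions are pairwise distinct. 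Counting one solution per positively decorated maximal simplex then gives the claimed lower bound. The main obstacle — and the only genuinely delicate point — is the uniqueness-and-persistence step: making rigorous that a non-degenerate zero of the limit system survives the perturbation as a unique non-degenerate zero in a fixed neighborhood, and that the neighborhoods attached to different simplices are genuinely disjoint after transporting back; this is where one must be careful that the convergence is $C^1$ and uniform on compacta, and that the "strict convexity across facets" hypothesis on $\nu$ is used to get strict inequalities $\nu(\mathbf a_j) > \ell_\Delta(\mathbf a_j)$ off $\Delta$.
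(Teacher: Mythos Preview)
Your proposal is correct and follows essentially the same route as the paper: rescale via the affine piece $\ell_\Delta$ of $\nu$ on each positively decorated simplex so that the non-simplex monomials acquire strictly positive powers of $t$, invoke Proposition~\ref{prop:posorthantsimplex} on the truncated system, and then separate the contributions of different simplices by noting that the rescalings $t^{-a_\ell}\cdot K$ become pairwise disjoint as $t\to 0^+$. The paper's write-up is terser on the persistence step (it simply asserts that each $t^{-a_\ell}\cdot K$ contains a non-degenerate positive solution for small $t$), whereas you spell out the $C^1$-convergence and implicit-function-theorem justification; both are the same argument.
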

\begin{proof}
Let $\Gamma_1,\ldots,\Gamma_k$ be the maximal simplices in $\Gamma$ which are
positively decorated by $C$. Let $\nu_{\ell}$ be the restriction of $\nu$ to $\Gamma_{\ell}$ for
$\ell=1,\ldots,k$. The function $\nu_{\ell}$ is affine hence there exist $\mathbf a_{\ell}=(a_{1 \ell},\ldots,a_{d \ell}) \in \R^d$ and $b_{\ell} \in \R$ such that
$\nu_{\ell}(x)=\langle a_{\ell},x \rangle +b_{\ell}$ for any $x=(x_1,\ldots,x_d) \in \Gamma_{\ell}$. 
Moreover, since $\nu$ is convex and not affine on the union of two distinct maximal simplices of $\Gamma$, 
setting $Y=(Y_1,\ldots,Y_d)$ and $Yt^{-\mathbf a_{\ell}}=(Y_1t^{-a_{1 \ell}},\ldots,
Y_dt^{-a_{d \ell}})$ we get
$$
\frac{f_{i,t}(Yt^{-\mathbf a_{\ell}})}{t^{b_{\ell}}}
=
f_{i}^{(\ell)}(Y)+r_{i,t}(Y), \quad i=1,\ldots,d,$$

where $f_{i}^{(\ell)}(Y)=\sum_{w_j \in \Gamma_{\ell}} C_{ij}Y^{w_j}$ and
$r_{i,t}(Y)$ is a polynomial in $\R[Y_1,\ldots,Y_d]$ whose coefficients are
products of real numbers by positive powers of $t$. Since $\Gamma_{\ell}$ is
positively decorated by $C$, by Proposition \ref{prop:posorthantsimplex} the system
$f_{1}^{(\ell)}(Y)=\cdots=f_{d}^{(\ell)}(Y)=0$ has one non-degenerate positive solution. Let $K$ be a compact set in the positive orthant which
contains all the non-degenerate positive solutions of the systems
$f_{1}^{(\ell)}(Y)=\cdots=f_{d}^{(\ell)}(Y)=0$ for $\ell=1,\ldots,k$. If $t>0$
is small enough, the sets $t^{-\mathbf a_{\ell}}\cdot K= \{(y_1t^{-a_{1
\ell}},\ldots, y_dt^{-a_{n \ell}}) \, | \, (y_1,\ldots,y_d) \in K\}$,
$\ell=1,\ldots,k$, are pairwise disjoint and each one contains at least one
non-degenerate positive solution of the system \eqref{E:Virosystem}.
\end{proof}

Recall that $Q$ is the convex-hull of $\mathcal A$ and that $\Vol(\,
\cdot \, )$ stands for the Euclidean volume of $\R^d$ multiplied by
$d!$.  The triangulation $\Gamma$ is called \emph{unimodular} if $A$
has integer entries and any maximal simplex $\Gamma_i \in \Gamma$
verifies $\Vol(\Gamma_i)=1$.  Sturmfels \cite{St} showed that if $A$
has integer entries and $\Gamma$ is unimodular then for $t>0$ small
enough the system \eqref{E:Virosystem} has exactly $\Vol(Q)$
non-degenerate solutions with non-zero real coordinates, and no other
solution with non-zero complex coordinates.
The following proposition shows that if moreover the triangulation is
positively decorated, then all these solutions are positive. We stress that the
existence of maximally positive systems for monomial supports admitting a
regular, unimodular and regular triangulation was already proved in \cite[Lemma
3.9]{SS} using Wronski systems.

\begin{proposition}\label{P:maximally}
If $A$ is a matrix with integer entries, $\Gamma$ is a unimodular regular
triangulation and all its $d$-simplices are positively decorated by $C$, then for $t>0$ small enough
the system \eqref{E:Virosystem} has exactly $\Vol(Q)$ non-degenerate positive solutions, and no other solution with non-zero complex coordinates.
\end{proposition}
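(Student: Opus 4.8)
The plan is to combine Sturmfels' result (quoted just above) with Theorem~\ref{thm:nbpossols}. Sturmfels' theorem gives that, under the hypotheses that $A$ has integer entries and $\Gamma$ is unimodular and regular, there exists $t_0>0$ such that for all $0<t<t_0$ the system \eqref{E:Virosystem} has exactly $\Vol(Q)$ non-degenerate solutions with non-zero real coordinates and no further solution with non-zero complex coordinates. So the \emph{total} count of toric complex solutions is pinned down to $\Vol(Q)$, all of them real; the only thing left to prove is that all of these $\Vol(Q)$ solutions in fact lie in the positive orthant.

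For this I would invoke Theorem~\ref{thm:nbpossols}: since \emph{every} $d$-simplex of $\Gamma$ is positively decorated by $C$, there exists $t_1>0$ such that for $0<t<t_1$ the number of non-degenerate positive solutions of \eqref{E:Virosystem} is at least the number of maximal simplices of $\Gamma$. Because $\Gamma$ is unimodular, every maximal simplex has normalized volume $1$, and the number of maximal simplices of a unimodular triangulation of $Q$ equals $\Vol(Q)$. Hence for $t<\min(t_0,t_1)$ the system has at least $\Vol(Q)$ non-degenerate positive solutions. Combined with the upper bound of $\Vol(Q)$ on the number of non-degenerate toric complex solutions coming from Sturmfels' result (which is also just Kouchnirenko's bound, the normalized volume of $Q$), we conclude that the system has exactly $\Vol(Q)$ non-degenerate solutions, all of them positive, and no other solution with non-zero complex coordinates. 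Taking $t_0$ in the statement to be $\min(t_0,t_1)$ finishes the argument.

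The only subtlety to check carefully is bookkeeping about the two thresholds and making sure the inequality chain is tight: Theorem~\ref{thm:nbpossols} gives a \emph{lower} bound of $\Vol(Q)$ on positive non-degenerate solutions, Sturmfels gives an \emph{exact} count of $\Vol(Q)$ non-degenerate real toric solutions (with no extra complex ones), and positive solutions are in particular real toric solutions, so the lower bound saturates the exact count. There is no real obstacle here — the content has already been established in Theorem~\ref{thm:nbpossols} and in the cited work of Sturmfels; this proposition is essentially the observation that those two statements, applied simultaneously, force all solutions to be positive. If one wished to be self-contained one could also remark that the same disjointness-of-clusters argument used in the proof of Theorem~\ref{thm:nbpossols}, together with the fact that for a unimodular simplex the truncated system is a monomial change of coordinates away from a linear system with a unique (hence positive, since the simplex is positively decorated) toric solution, accounts for every one of the $\Vol(Q)$ solutions.
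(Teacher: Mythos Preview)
Your argument is correct and follows essentially the same approach as the paper: use Theorem~\ref{thm:nbpossols} together with unimodularity to get at least $\Vol(Q)$ non-degenerate positive solutions, and then invoke the Kouchnirenko upper bound (the paper cites Kouchnirenko directly rather than going through Sturmfels' refinement, but as you note these give the same bound here) to conclude equality and the absence of further toric complex solutions.
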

\begin{proof}
By Theorem \ref{thm:nbpossols}, the system \eqref{E:Virosystem} has at least $\Vol(Q)$ non-degenerate solutions in the positive orthant
for $t>0$ small enough. On the other hand, the system \eqref{E:Virosystem} has at most $\Vol(Q)$ non-degenerate solutions with non-zero complex coordinates by Kouchnirenko Theorem
\cite{Kouch}.
\end{proof}

Polynomial systems whose all non-degenerate solutions with non-zero
complex coordinates are contained in the positive orthant are called
\emph{maximally positive} in \cite{B}. We shall put a special focus on
classical polytopes admitting maximally positive polynomial systems
in Section \ref{sec:maxpos}.

\section{Bipartite dual graphs and balanced triangulations}\label{sec:dualgraph}

The aim of this section is to show how Theorem
\ref{thm:nbpossols} may be used to construct systems of polynomials
with prescribed support and many positive real solutions. 
Throughout this paper, by simplicial complex, we always
mean a full-dimensionsal pure geometric simplicial complex embedded in $\R^d$, see \cite[Definition 2.3.5]{Maund}.

\begin{definition}
Let $\mathcal A\subset \R^d$ be a finite configuration of points,
represented by a $d\times n$ matrix~$A$.  A
\emph{positively decorated simplicial complex} supported on $\mathcal A$ is a pair
$(\Gamma, C)$, where $\Gamma$ is a simplicial complex whose
vertex set is a subset of $\mathcal A$ and $C$ is a
$d\times n$ matrix such that every submatrix of
size $d\times (d+1)$ corresponding to a $d$-simplex in
$\Gamma$ is an oriented matrix.
\end{definition}

Throughout this paper, we represent a pure $d$-dimensional simplicial complex as a finite set
$\{\tau_1,\ldots, \tau_k\}$, where $\tau_i\subset \{1,\ldots,
n\}$ has cardinality $d+1$. 
For $\tau\in\Gamma$ a $d$-simplex and $C$ a coefficient matrix
associated to the point configuration $\mathcal A$, we let $C_\tau$
denote the $d\times (d+1)$ submatrix of $C$ whose columns correspond to
the $d+1$ vertices in $\tau$.
Let $\tau$ be a $d$-simplex in $\R^d$ with vertices $\mathbf
a_1,\ldots,\mathbf a_{d+1}$, and
let $A$ be the corresponding $d \times (d+1)$ matrix. Let $\mathbf a$ be any point in
the interior of $\tau$ and let $D_{\tau}$ be the $d \times (d+1)$ matrix with
columns $\mathbf a_1-\mathbf a,\mathbf a_2-\mathbf a,\ldots,\mathbf
a_{d+1}-\mathbf a$. 
Clearly, the oriented matroid defined by $D_{\tau}$ does not depend on the
choice of $\mathbf a$.  We say that two matrices of the same size define the same
(resp., opposite) oriented matroid if they have the same bases and two bases
corresponding to the same set of columns have determinant of the same (resp.,
opposite) sign.

\begin{lemma}\label{L:orientedmatroid} The matrix $D_{\tau}$ is oriented.
  Therefore, a $d \times (d+1)$ matrix $C_\tau$ positively decorates $\tau$ if and only if
  either $C_\tau$ and $D_{\tau}$ define the same oriented matroid, or $C_\tau$
  and $D_{\tau}$ define opposite oriented matroids.  \end{lemma}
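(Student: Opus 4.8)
The plan is to prove the two assertions of the lemma separately: first that $D_\tau$ is oriented, and then the characterization of when $C_\tau$ positively decorates $\tau$.

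\textbf{Step 1: $D_\tau$ is oriented.} Since $\mathbf a$ lies in the interior of the simplex $\tau = \conv(\mathbf a_1,\ldots,\mathbf a_{d+1})$, we may write $\mathbf a = \sum_{j=1}^{d+1}\lambda_j \mathbf a_j$ with $\lambda_j > 0$ and $\sum_j\lambda_j = 1$. Then $\sum_{j=1}^{d+1}\lambda_j(\mathbf a_j - \mathbf a) = \mathbf a - \mathbf a = \mathbf 0$, so the strictly positive vector $(\lambda_1,\ldots,\lambda_{d+1})$ lies in the kernel of $D_\tau$. Moreover $D_\tau$ has rank $d$: the vectors $\mathbf a_j - \mathbf a$ affinely span $\R^d$ since $\tau$ is full-dimensional, and in fact any $d$ of them are linearly independent because $\widetilde A$ (the matrix of the $\mathbf a_j$ with a row of ones prepended) is invertible, which is equivalent to $\tau$ being a genuine $d$-simplex. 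Hence the kernel of $D_\tau$ is one-dimensional and generated by a strictly positive vector, and by the equivalence $(4)\Leftrightarrow(1)$ in Proposition~\ref{P:invariant}, $D_\tau$ is oriented.

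\textbf{Step 2: the characterization.} Suppose first that $C_\tau$ and $D_\tau$ define the same or opposite oriented matroids. By definition this means that for each $i$, $\minor(C_\tau,i)$ and $\minor(D_\tau,i)$ are both nonzero and their ratio has a sign independent of $i$ (either always $+$ or always $-$). Since $D_\tau$ is oriented, the signs $(-1)^i\minor(D_\tau,i)$ are all equal and nonzero; multiplying by the common sign $\varepsilon = \sign(\minor(C_\tau,i)/\minor(D_\tau,i))$ shows that the signs $(-1)^i\minor(C_\tau,i)$ are all equal and nonzero, i.e.\ $C_\tau$ is oriented, so it positively decorates $\tau$. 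Conversely, if $C_\tau$ positively decorates $\tau$, then $C_\tau$ is oriented, so all $\minor(C_\tau,i)$ are nonzero; thus $C_\tau$ and $D_\tau$ have the same set of bases (all $d$-subsets of columns). For each $i$, the sign of $\minor(C_\tau,i)/\minor(D_\tau,i)$ equals $\sign((-1)^i\minor(C_\tau,i))\cdot\sign((-1)^i\minor(D_\tau,i))$, and since each factor is individually constant in $i$, so is the product; calling this common sign $\varepsilon\in\{+,-\}$, we conclude that $C_\tau$ and $D_\tau$ define the same oriented matroid if $\varepsilon = +$ and opposite oriented matroids if $\varepsilon = -$.

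The argument is essentially bookkeeping with signs of maximal minors, so there is no real obstacle; the only point requiring a little care is the reduction in Step~1 that $D_\tau$ has rank exactly $d$ with a one-dimensional kernel, which is where the hypothesis that $\tau$ is a nondegenerate $d$-simplex (equivalently $\widetilde A$ invertible) enters. Once that is in place, Proposition~\ref{P:invariant} does all the work and the equivalence in Step~2 follows by translating "same/opposite oriented matroid" into the language of signs of minors and comparing with the definition of "oriented."
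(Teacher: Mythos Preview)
Your proof is correct, and Step~2 is essentially identical to the paper's argument. Step~1, however, takes a different route. The paper shows directly that $(-1)^i\minor(D_\tau,i)$ has constant sign by relating each $\minor(D_\tau,i)$ to $\det A(i)$, where $A(i)$ is the matrix with columns $\mathbf a_j-\mathbf a_i$ ($j\neq i$), and then observing that $(-1)^{i+1}\det A(i)=\det\widetilde A$ for every $i$. Your argument is cleaner for the lemma itself: writing $\mathbf a$ in barycentric coordinates immediately exhibits a strictly positive kernel vector, and Proposition~\ref{P:invariant} finishes. The trade-off is that the paper's computation yields the explicit identity linking the signs of the minors of $D_\tau$ to $\det\widetilde A_\tau$, and this identity is reused later (in the proof of Proposition~\ref{prop:simplicialcomploriented}) to define the sign $s(\tau)=\sign\bigl(\det\widetilde A_\tau\cdot\det\widetilde C_\tau\bigr)$ on $d$-simplices. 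Your approach would require re-deriving that relation when it is needed there.
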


\begin{proof}
Let $A(i)$ be the matrix obtained by removing the $i$-th column
from the matrix with columns (in this order) $\mathbf a_1-\mathbf a_i, \mathbf
a_2-\mathbf a_i,\ldots,\mathbf a_{d+1}-\mathbf a_i$.
Clearly, $\mbox{minor}(D_{\tau},i) \cdot \mbox{det} \, A(i) >0$. On the other hand, we compute
that $(-1)^{i+1} \mbox{det} \, A(i)$ coincide for $i=1,\ldots,d+1$ with the determinant
of the $(d+1) \times (d+1)$ matrix $\widetilde{A}$ obtained by adding to $A$ a
first row of ones.
Thus, all $(-1)^i \cdot \mbox{minor}(D_{\tau},i)$ have the same sign, which
means that $D_{\tau}$ is oriented. It follows that $C_\tau$ positively
decorates $\tau$ if and only if either $\mbox{minor}(C_\tau,i) \cdot \mbox{minor}(D_{\tau},i) >0$ for all $i$, or $\mbox{minor}(C_\tau,i) \cdot \mbox{minor}(D_{\tau},i) <0$ for all $i$.
\end{proof}

\begin{lemma} \label{L:bipartite} Let $\tau$ and $\tau'$ be two $n$-simplices
  in $\R^n$ with a common facet.  Assume that the simplicial complex
  $\{\tau,\tau'\}$ is positively decorated by a matrix $C$.  If $C_\tau$ and $D_{\tau}$
  define the same oriented matroid, then $C_{\tau'}$ and $D_{\tau'}$ define the
  opposite oriented matroids.  \end{lemma}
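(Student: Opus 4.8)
The plan is to reduce the statement to a sign comparison of maximal minors across the shared facet. Write $\tau = \sigma \cup \{p\}$ and $\tau' = \sigma \cup \{q\}$, where $\sigma$ is the common facet (a set of $d$ vertex indices) and $p \neq q$ are the two opposite vertices. Pick a point $\mathbf a$ in the interior of the common facet $\sigma$ (or, more safely, a generic interior point of $\tau \cup \tau'$; the oriented matroid of $D_\tau$ does not depend on this choice by the remark preceding Lemma~\ref{L:orientedmatroid}). Since $\tau$ and $\tau'$ lie on opposite sides of the hyperplane spanned by $\sigma$, the vertices $p$ and $q$ are separated by that hyperplane. First I would record the combinatorial fact I really need: the two bases of $D_\tau$ and $D_{\tau'}$ indexed by the $d$ columns of $\sigma$ together with, respectively, $p$ and $q$, have \emph{opposite} signs — this is precisely because $p$ and $q$ are on opposite sides of $\mathrm{aff}(\sigma)$, so the oriented volumes $\det(D)$ restricted to these column sets differ in sign. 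The same statement holds verbatim for $C$ in place of $D$: the minors $\minor(C_\tau, \text{position of } p)$ and $\minor(C_{\tau'}, \text{position of } q)$ are, up to the common factor coming from the shared columns $\sigma$, the determinants $\det(C_\sigma \mid C_p)$ and $\det(C_\sigma \mid C_q)$, and these two agree in sign if and only if $p$ and $q$ lie on the same side of the affine hull of the image $C(\sigma)$.

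Now combine this with Lemma~\ref{L:orientedmatroid}. By hypothesis $C_\tau$ and $D_\tau$ define the same oriented matroid, so in particular the $\sigma$-indexed basis has the same sign for $C$ and for $D$. Suppose, for contradiction, that $C_{\tau'}$ and $D_{\tau'}$ also define the same oriented matroid; then likewise the $\sigma$-indexed basis has the same sign for $C_{\tau'}$ and $D_{\tau'}$. Chaining these equalities: $\sign \det(C_\sigma \mid C_p)$ has a fixed relation to $\sign \det(D_\sigma \mid \mathbf a_p - \mathbf a)$, and $\sign \det(C_\sigma \mid C_q)$ has the \emph{same} relation to $\sign \det(D_\sigma \mid \mathbf a_q - \mathbf a)$ — here "same relation" because the shared columns $\sigma$ transport through identically. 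But on the $D$ side these two $\sigma$-bases have opposite signs (the separation fact above), while positive decorability of $\{\tau,\tau'\}$ forces, via Proposition~\ref{P:invariant}(4)/(5), a constraint that pins down the sign of $\det(C_\sigma \mid C_p)$ relative to $\det(C_\sigma \mid C_q)$. The cleanest way to see the contradiction: positive decorability of both simplices means the kernel vectors of $C_\tau$ and of $C_{\tau'}$ are both strictly positive; restricting attention to the $\sigma$-coordinates, the linear dependence $C_p = \sum_{j\in\sigma} \lambda_j C_j$ (from the kernel of $C_\tau$, all $-\lambda_j$ of one sign) and $C_q = \sum_{j\in\sigma} \mu_j C_j$ (from $C_{\tau'}$, all $-\mu_j$ of one sign) force $C_p$ and $C_q$ to be in \emph{the same} open cone over $C(\sigma)$, hence on the same side of $\mathrm{aff}(C(\sigma))$, hence $\det(C_\sigma \mid C_p)$ and $\det(C_\sigma \mid C_q)$ have the same sign. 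This contradicts the chain which concluded they must have opposite signs. Therefore $C_{\tau'}$ and $D_{\tau'}$ define opposite oriented matroids.

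The main obstacle is bookkeeping the signs correctly: one must be careful that expanding $\minor(C_\tau, i)$ along the "new" column versus the shared columns $\sigma$ introduces permutation signs $(-1)^{\text{something}}$, and that these are \emph{identical} for $\tau$ and $\tau'$ because $p$ and $q$ sit in the same position relative to the ordering of $\sigma$ — this is what lets the shared-column data cancel out of the comparison. I would handle this by fixing, once and for all, an ordering of the indices in which the $d$ elements of $\sigma$ come first and the lone extra vertex comes last, so that the relevant minor of $C_\tau$ (resp.\ $C_{\tau'}$) obtained by deleting the last column is literally $\det(C_\sigma)$ and the "separating" minor is $\det(C_\sigma \mid C_p)$ (resp.\ $\det(C_\sigma \mid C_q)$) with no extra sign. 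With that normalization the argument above is a direct sign chase, and the geometric input — $p,q$ separated by $\mathrm{aff}(\sigma)$ downstairs, but $C_p, C_q$ on the same side of $\mathrm{aff}(C(\sigma))$ by positivity of both kernel vectors — does all the work.
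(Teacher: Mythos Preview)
Your plan follows the paper's route---compare one well-chosen minor across the shared facet---but you have overcomplicated the $C$ side and mishandled the $D$ side, and the notation $\det(C_\sigma\mid C_p)$ is not well defined.

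On the $C$ side, note that $\sigma$ has $d$ vertices, so $C_\sigma$ is already $d\times d$ and $(C_\sigma\mid C_p)$ is $d\times(d+1)$; there is no such determinant. What you actually need is much simpler: with your normalization (the $d$ columns of $\sigma$ first, the extra vertex last), $\minor(C_\tau,d+1)$ and $\minor(C_{\tau'},d+1)$ are both literally $\det(C_\sigma)$---the \emph{same} number. No cone argument, no ``same side of $\mathrm{aff}(C(\sigma))$'' is required; the paper dispatches this in a single sentence. Your kernel/cone discussion about $C_p$ and $C_q$ is a correct observation but it is doing no work toward the lemma.

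On the $D$ side, you cannot use a single point $\mathbf a$ in the interior of the common facet: then the $\sigma$-columns $\mathbf a_j-\mathbf a$ ($j\in\sigma$) lie in the $(d-1)$-dimensional direction of the facet and are linearly dependent, so the relevant minor vanishes. Nor can you take a ``generic interior point of $\tau\cup\tau'$'', since the oriented matroid of $D_\tau$ is only known to be independent of $\mathbf a$ for $\mathbf a\in\mathrm{int}(\tau)$. The paper fixes this by choosing \emph{two} points $\mathbf a\in\mathrm{int}(\tau)$ and $\mathbf a'\in\mathrm{int}(\tau')$ symmetric across the facet hyperplane; then $\det(\mathbf a_j-\mathbf a)_{j\in\sigma}$ and $\det(\mathbf a_j-\mathbf a')_{j\in\sigma}$ differ by the determinant of a reflection and hence have opposite signs.

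With those two corrections your argument collapses to exactly the paper's: $\minor(C_\tau,d+1)=\minor(C_{\tau'},d+1)$ trivially, while $\minor(D_\tau,d+1)$ and $\minor(D_{\tau'},d+1)$ have opposite signs, so the oriented-matroid relation must flip from $\tau$ to $\tau'$.
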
 

\begin{proof} Due to Proposition \ref{P:invariant}, we may permute
  simultaneously the columns of $A$ and $C$ so that $\tau$ corresponds to the
  columns $1,\ldots,d,d+1$ and $\tau'$ corresponds to the columns
  $1,\ldots,d,d+2$. Thus, the submatrix $C_{\tau}$ given by the columns
  $1,\ldots,d,d+1$ of $C$ and the submatrix $C_{\tau'}$ given by the columns
  $1,\ldots,d,d+2$ of $C$  are oriented.

Choose two points $\mathbf a$ and $\mathbf a'$ in the interior of $\tau$ and $\tau'$
respectively  which are symmetric with respect to the common facet with
vertices $\mathbf a_1,\ldots,\mathbf a_d$.  Then, $\mbox{det}(\mathbf
a_1-\mathbf a,\mathbf a_2-\mathbf a,\ldots,\mathbf a_d-\mathbf a)$ and
$\mbox{det}(\mathbf a_1-\mathbf a',\mathbf a_2-\mathbf a',\ldots,\mathbf
a_d-\mathbf a')$ have opposite signs (a hyperplane
symmetry has negative determinant). This means that
$\mbox{minor}(D_{\tau},d+1)$ and $\mbox{minor}(D_{\tau'},d+1)$ have opposite
signs. On the other hand, $\mbox{minor}(C_{\tau},d+1)$ and
$\mbox{minor}(C_{\tau'},d+1)$ are equal. It follows that if $C_{\tau}$ and
$D_{\tau}$ define the same oriented matroid, then $C_{\tau'}$ and $D_{\tau'}$
define the opposite oriented matroids.\end{proof}

The {\it dual graph} of a pure simplicial complex is the adjacency graph of its
simplices of maximal dimension.

\begin{proposition}\label{prop:simplicialcomploriented}
Let $\Gamma$ be a pure simplicial complex of dimension $d$ in $\R^d$. If there exists a
matrix $C$ such that the pair $(\Gamma, C)$ is a positively decorated
simplicial complex, then the dual graph of $\Gamma$ is bipartite.
\end{proposition}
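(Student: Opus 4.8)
The plan is to define a two-coloring of the vertices of the dual graph of $\Gamma$ and to check that adjacent vertices receive different colors. Fix the positively decorating matrix $C$. By Lemma \ref{L:orientedmatroid}, for every $d$-simplex $\tau\in\Gamma$, the submatrix $C_\tau$ either defines the same oriented matroid as $D_\tau$ or the opposite one; these two cases are mutually exclusive since $C_\tau$ is oriented (hence of full rank, with all its maximal minors nonzero). This gives a well-defined function $\sigma\colon\{d\text{-simplices of }\Gamma\}\to\{+,-\}$, where $\sigma(\tau)=+$ if $C_\tau$ and $D_\tau$ define the same oriented matroid and $\sigma(\tau)=-$ otherwise. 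I would take $\sigma$ to be the candidate $2$-coloring of the vertex set of the dual graph.

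Next I would verify that $\sigma$ is a proper coloring, i.e. that two adjacent maximal simplices get opposite signs. Two vertices of the dual graph are adjacent precisely when the corresponding $d$-simplices $\tau,\tau'$ share a common facet. Restricting attention to the subcomplex $\{\tau,\tau'\}$, which is still positively decorated by $C$, Lemma \ref{L:bipartite} applies and tells us that if $C_\tau$ and $D_\tau$ define the same oriented matroid then $C_{\tau'}$ and $D_{\tau'}$ define opposite oriented matroids — and by symmetry of the argument (swapping the roles of $\tau$ and $\tau'$), the converse holds as well. Hence $\sigma(\tau)\neq\sigma(\tau')$ whenever $\tau$ and $\tau'$ are adjacent.

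A graph admitting a proper $2$-coloring is bipartite (one may take the two color classes as the parts of the bipartition; edges only run between the parts, and isolated vertices or the empty case are harmless). Therefore the dual graph of $\Gamma$ is bipartite, which is the claim.

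I do not expect a serious obstacle here: the real content has already been isolated in Lemmas \ref{L:orientedmatroid} and \ref{L:bipartite}, and what remains is the observation that a consistent sign assignment flipping across every shared facet is exactly a proper $2$-coloring of the dual graph. The only mild point to be careful about is that $\sigma$ is genuinely well-defined and single-valued — that $C_\tau$ cannot simultaneously define the same and the opposite oriented matroid as $D_\tau$ — which follows because $C_\tau$ is oriented and so all of its maximal minors are nonzero, ruling out the degenerate coincidence.
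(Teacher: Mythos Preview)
Your proposal is correct and follows essentially the same approach as the paper: both define a sign on each $d$-simplex recording whether $C_\tau$ and $D_\tau$ determine the same or opposite oriented matroids, then invoke Lemmas~\ref{L:orientedmatroid} and~\ref{L:bipartite} to see that this sign flips across common facets, yielding a proper $2$-coloring of the dual graph. The paper additionally writes this sign explicitly as the sign of $\det(\widetilde{A}_\tau)\cdot\det(\widetilde{C}_\tau)$, but this is just a concrete formula for your $\sigma$.
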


\begin{proof} This is a straightforward consequence of Lemma \ref{L:bipartite}.
  In fact, the proofs of Lemma \ref{L:bipartite} and Lemma
  \ref{L:orientedmatroid} show how one can associate a sign $s(\tau)$ to each
  $d$-simplex $\tau$ of $\Gamma$ so that any two adjacent (with a common
  facet) such simplices have opposite signs.  Let $\widetilde{A}$ denote the matrix
  obtained by adding a first row of $1$ to $A$, and let $\widetilde{A}_{\tau}$ be
  the square submatrix corresponding to a $d$-simplex $\tau$ of $\Gamma$. Let
  $C_{\tau}$ be the submatrix of $C$ corresponding to $\tau$. Since $(\Gamma,
  C)$ is positively decorated, all $(-1)^{i+1} \mbox{minor}(C_{\tau},i)$ have the same sign
  $s(C_{\tau}) \in \{\pm 1\}$. We define then $s(\tau)$ as the sign of the
  product $\mbox{det} (\widetilde{A}_{\tau}) \cdot s(C_{\tau})$. If we denote by
  $\widetilde{C}_{\tau}$ the matrix obtained by adding a first row of $1$ to
  $C_{\tau}$, we immediately obtain that $s(\tau)$ is the sign of $\mbox{det}
  (\widetilde{A}_{\tau}) \cdot \mbox{det} (\widetilde{C}_{\tau})$.  \end{proof}

A large class of simplicial complexes with
bipartite dual graphs is provided by \emph{balanced} simplicial complexes.

\begin{definition} \cite[Section III.4]{Sta3}A $(d+1)$-coloration of a simplicial complex $\Gamma$
  supported on $\mathcal A$ is a map $\gamma:\mathcal A\rightarrow \{1,\ldots,
  d+1\}$ (or more generally a map from $\mathcal A$ to a set with $d+1$
  elements) such that $\gamma(\mathbf a_1)\neq \gamma(\mathbf a_2)$ for any
  edge $(a_1,a_2)$ of $\gamma$. 
A $d$-dimensional simplicial complex $\Gamma$ supported on $\mathcal A$ is
called {\it balanced} if there exists such a coloration.  
\end{definition}

Such colorations are sometimes called
  \emph{foldings} since they can extended to a map from $\Gamma$ to the $d$-dimensional
  standard simplex which is linear on each $d$-simplex of $\Gamma$.
Similarly, balanced triangulations are sometimes referred to as {\it
foldable} triangulations, see \emph{e.g.} \cite{JZ} and references within.

\begin{proposition}\label{prop:orientcoloring} Let $\mathbf e_i$ be the $i$-th
  canonical basis vector of $\R^d$ and $\mathbf e_{d+1}$ be the vector
  $(-1,\ldots, -1)$.  Let $\Gamma$ be a simplicial complex supported on
  $\mathcal A=\{\mathbf a_i\}_{1\leq i\leq \lvert\mathcal A\rvert}$. If $\Gamma$ is
  balanced and $\gamma:\mathcal A\rightarrow \{1,\ldots, d+1\}$ is a
  $(d+1)$-coloring of $\Gamma$, then the matrix $C$ with columns $(\mathbf
  e_{\gamma(\mathbf a_i)})_{1\leq i\leq \lvert\mathcal A\rvert}$ positively decorates $\Gamma$.
\end{proposition}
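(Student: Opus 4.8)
The plan is to show that the coefficient matrix $C$ built from the coloring is oriented on every $d$-simplex of $\Gamma$ by reducing to a single model computation. First I would fix a $d$-simplex $\tau = \{i_1,\ldots,i_{d+1}\}$ of $\Gamma$. Since $\Gamma$ is balanced with coloring $\gamma$, the restriction of $\gamma$ to the $d+1$ vertices of $\tau$ must be injective: indeed any two distinct vertices of $\tau$ are joined by an edge of $\Gamma$, hence receive distinct colors. Therefore the multiset $\{\gamma(\mathbf a_{i_1}),\ldots,\gamma(\mathbf a_{i_{d+1}})\}$ is exactly $\{1,\ldots,d+1\}$, and the submatrix $C_\tau$ has as its columns the vectors $\mathbf e_{\gamma(\mathbf a_{i_1})},\ldots,\mathbf e_{\gamma(\mathbf a_{i_{d+1}})}$, i.e. a permutation of the columns $\mathbf e_1,\ldots,\mathbf e_{d+1}$.

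Next I would invoke Proposition~\ref{P:invariant}: by part~(3), it suffices to check that the specific matrix $M_0$ whose columns are $\mathbf e_1,\ldots,\mathbf e_d,\mathbf e_{d+1}$ in that order is oriented, since $C_\tau = M_0 \cdot P$ for some permutation matrix $P \in \mathfrak{S}_{d+1}$. So the whole statement collapses to one elementary linear-algebra fact about $M_0 = [\mathbf e_1 \mid \cdots \mid \mathbf e_d \mid (-1,\ldots,-1)^T]$. For $i \le d$, deleting the $i$-th column of $M_0$ leaves a $d\times d$ matrix whose first $d-1$ columns are the standard basis vectors of $\R^d$ other than $\mathbf e_i$, together with the last column $(-1,\ldots,-1)^T$; expanding, its determinant is $(-1)^{d-i}\cdot(-1) = (-1)^{d-i+1}$, so $(-1)^i\minor(M_0,i) = (-1)^{d+1}$. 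For $i = d+1$, deleting the last column gives the identity, so $\minor(M_0,d+1) = 1$ and $(-1)^{d+1}\minor(M_0,d+1) = (-1)^{d+1}$. Hence all the $(-1)^i\minor(M_0,i)$ equal $(-1)^{d+1}$, which is nonzero and constant in sign; thus $M_0$ is oriented, and so is every $C_\tau$. Alternatively, and perhaps more cleanly, I could verify part~(4) directly: the vector $(1,1,\ldots,1) \in \R^{d+1}$ lies in the kernel of $M_0$ since $\mathbf e_1 + \cdots + \mathbf e_d + \mathbf e_{d+1} = 0$, and all its coordinates are positive, so $M_0$ is oriented by Proposition~\ref{P:invariant}.

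Since $\tau$ was an arbitrary $d$-simplex of $\Gamma$, every $d\times(d+1)$ submatrix of $C$ corresponding to a $d$-simplex of $\Gamma$ is oriented, which by definition means $(\Gamma, C)$ is a positively decorated simplicial complex. I do not anticipate a serious obstacle here; the only point requiring a moment's care is the observation that a $(d+1)$-coloring restricts to a bijection on the vertex set of each maximal simplex, which is where fullness/purity of $\Gamma$ (every maximal face has $d+1$ vertices and every pair of its vertices spans an edge) is used. Everything else is the bookkeeping of the model determinant computation, which the kernel-vector argument sidesteps entirely.
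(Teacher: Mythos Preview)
Your proof is correct and follows essentially the same approach as the paper: both reduce to the observation that each submatrix $C_\tau$ is a column permutation of the model matrix $M_0=[\mathbf e_1\mid\cdots\mid\mathbf e_{d+1}]$, note that $M_0$ is oriented, and invoke Proposition~\ref{P:invariant}. You simply flesh out the two steps the paper leaves implicit---why the coloring is a bijection on each maximal simplex, and why $M_0$ itself is oriented---so your version is a strictly more detailed rendering of the same argument.
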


\begin{proof} By construction, every $d\times(d+1)$ submatrix of $C$
  corresponding to a $d$-simplex of $\Gamma$ is a column permutation of the
  $d\times(d+1)$ matrix with columns $(\mathbf e_1,\ldots, \mathbf e_{d+1})$.
  This latter matrix is oriented and hence the statement follows from
  Proposition \ref{P:invariant}.  \end{proof}

A pure simplicial complex $\Gamma$ is called {\it locally strongly connected} if 
the dual graph of the star of any vertex is connected.
By \cite[Proposition 6]{J} and \cite[Corollary 11]{J}, a locally
strongly connected and simply-connected complex $\Gamma$ on a finite set $\mathcal
A$ is balanced if and only if its dual graph is bipartite, see also
\cite[Theorem 5]{Izmes}. It is worth
noting that any triangulation of $\mathcal A$ (\emph{i.e.} a
triangulation of the convex-hull of $\mathcal A$ with vertices in
$\mathcal A$) is locally strongly connected and simply
connected. 

\begin{theorem} \label{T:manypositive} Assume that a finite full-dimensional
  point configuration $\mathcal A$ in $\R^d$  admits a regular triangulation
  $\Gamma$ which is balanced, or equivalently, whose dual graph is bipartite.
  Then there exists a polynomial system with support $\mathcal A$ whose number
  of positive solutions is at least the number of $d$-simplices of
  $\Gamma$. If furthermore $\mathcal A\subset\Z^d$ and the triangulation
  $\Gamma$ is unimodular, then there exists a polynomial system with support
  $\mathcal A$ which is maximally positive.  \end{theorem}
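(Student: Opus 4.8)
The plan is to assemble Theorem~\ref{T:manypositive} directly from the machinery already in place, since every ingredient has been prepared. First I would record the equivalence in the hypothesis: the triangulation $\Gamma$ of $\mathcal A$ is balanced if and only if its dual graph is bipartite. As noted just before the statement, any triangulation of a point configuration is locally strongly connected and simply connected, so this follows from \cite[Proposition 6]{J} and \cite[Corollary 11]{J} (equivalently \cite[Theorem 5]{Izmes}); this justifies the ``or equivalently'' clause and means I may freely assume $\Gamma$ is balanced.

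\smallskip

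Next I would produce the coefficient matrix. Since $\Gamma$ is balanced, fix a $(d+1)$-coloring $\gamma:\mathcal A\to\{1,\ldots,d+1\}$. By Proposition~\ref{prop:orientcoloring}, the $d\times n$ matrix $C$ whose $i$-th column is $\mathbf e_{\gamma(\mathbf a_i)}$ (with $\mathbf e_1,\ldots,\mathbf e_d$ the standard basis of $\R^d$ and $\mathbf e_{d+1}=(-1,\ldots,-1)$) positively decorates every $d$-simplex of $\Gamma$. Now apply Viro's construction: because $\Gamma$ is regular, pick a convex function $\nu$ certifying regularity and form the deformed system \eqref{E:Virosystem} with this $C$ and this $\nu$. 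By Theorem~\ref{thm:nbpossols}, there is $t_0>0$ such that for all $0<t<t_0$ the system \eqref{E:Virosystem} has at least as many non-degenerate positive solutions as there are maximal simplices of $\Gamma$ positively decorated by $C$ — and that is \emph{all} of them. Fixing any such $t$ gives a polynomial system with support contained in $\mathcal A$ (in fact equal to $\mathcal A$, since every point of $\mathcal A$ is a vertex of $\Gamma$, so the corresponding column of $C$ is nonzero and its monomial survives in at least one equation) having at least $\#\{d\text{-simplices of }\Gamma\}$ positive solutions. This settles the first assertion.

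\smallskip

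For the ``furthermore'' part I add the hypotheses $\mathcal A\subset\Z^d$ and $\Gamma$ unimodular. Then the same data $(\Gamma,\nu,C)$ satisfy the hypotheses of Proposition~\ref{P:maximally}: $A$ has integer entries, $\Gamma$ is a unimodular regular triangulation, and all its $d$-simplices are positively decorated by $C$. Hence for $t>0$ small enough the system \eqref{E:Virosystem} has exactly $\Vol(Q)$ non-degenerate positive solutions and no other solution with nonzero complex coordinates, i.e.\ it is maximally positive, with support $\mathcal A$. I would note in passing that $\Vol(Q)$ equals the number of maximal simplices of $\Gamma$ by unimodularity, so this is consistent with the first part.

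\smallskip

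There is essentially no obstacle here: the theorem is a synthesis, and the only point requiring a word of care is the reduction ``balanced $\Leftrightarrow$ bipartite dual graph,'' which is precisely where the topological hypotheses (local strong connectedness and simple connectedness of a triangulation) are invoked and which rests on the cited results of Joswig and of Izmestiev. Everything else is a direct citation of Proposition~\ref{prop:orientcoloring}, Theorem~\ref{thm:nbpossols}, and Proposition~\ref{P:maximally}.
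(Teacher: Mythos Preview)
Your proof is correct and follows exactly the paper's approach, which simply records that the theorem is a direct consequence of Theorem~\ref{thm:nbpossols}, Proposition~\ref{prop:orientcoloring}, and Kouchnirenko's theorem; your invocation of Proposition~\ref{P:maximally} for the second assertion is equivalent, since that proposition is itself just Theorem~\ref{thm:nbpossols} combined with Kouchnirenko's bound. One small imprecision: in your parenthetical about the support equaling $\mathcal A$, the claim ``every point of $\mathcal A$ is a vertex of $\Gamma$'' need not hold for an arbitrary triangulation of the convex hull, but your conclusion is still correct since every column of $C$ is one of the nonzero vectors $\mathbf e_1,\ldots,\mathbf e_{d+1}$ regardless.
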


\begin{proof}
  This is a direct consequence of Theorem \ref{thm:nbpossols}, 
  Proposition \ref{prop:orientcoloring} and Kouchnirenko's theorem
  \cite{Kouch}.
\end{proof}
 
We finish this section by an explicit example of a polynomial system with
prescribed number of positive solutions obtained using this construction.

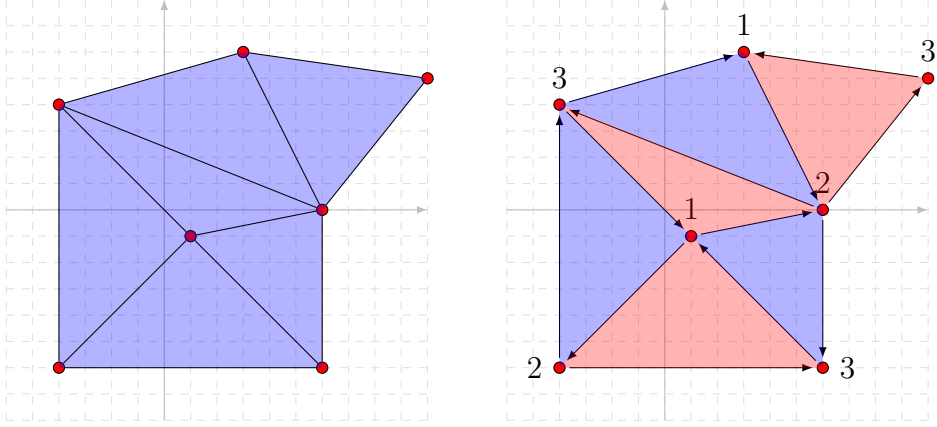
\begin{figure}
\centering
\begin{tikzpicture}[mynode/.style={circle, inner sep=1.5pt, fill=red}, scale = 0.7]
    \coordinate (Origin)   at (0,0);
    \coordinate (XAxisMin) at (-3,0);
    \coordinate (XAxisMax) at (5,0);
    \coordinate (YAxisMin) at (0,-4);
    \coordinate (YAxisMax) at (0,4);
    \draw [thin, lightgray,-latex] (XAxisMin) -- (XAxisMax);
    \draw [thin, lightgray,-latex] (YAxisMin) -- (YAxisMax);
  \draw[style=help lines, step= 0.5,dashed,opacity=0.5, lightgray] (-3,-4) grid (5,4);
  \node[draw,mynode] (A1) at (0.5,-0.5) {};
  \node[draw,mynode] (A2) at (-2,-3) {};
  \node[draw,mynode] (A3) at (-2,2) {};
  \node[draw,mynode] (A4) at (3,0) {};
  \node[draw,mynode] (A5) at (1.5,3) {};
  \node[draw,mynode] (A6) at (5,2.5) {};
  \node[draw,mynode] (A7) at (3,-3) {};
  \fill[opacity=0.3, blue] (A1.center) -- (A2.center) -- (A3.center) -- cycle;
  \fill[opacity=0.3, blue] (A1.center) -- (A4.center) -- (A3.center) -- cycle;
  \fill[opacity=0.3, blue] (A1.center) -- (A2.center) -- (A7.center) -- cycle;
  \fill[opacity=0.3, blue] (A1.center) -- (A4.center) -- (A7.center) -- cycle;
  \fill[opacity=0.3, blue] (A4.center) -- (A5.center) -- (A3.center) -- cycle;
  \fill[opacity=0.3, blue] (A4.center) -- (A5.center) -- (A6.center) -- cycle;
  \draw (A1) -- (A2) -- (A3) -- (A1) -- (A4) --(A3) -- (A5) -- (A4) -- (A6) -- (A5);

  \draw (A4) -- (A7) -- (A1);
  \draw (A2) -- (A7);
\end{tikzpicture}
\quad\quad
\begin{tikzpicture}[mynode/.style={circle, inner sep=1.5pt,
  fill=red}, scale = 0.7]
    \coordinate (Origin)   at (0,0);
    \coordinate (XAxisMin) at (-3,0);
    \coordinate (XAxisMax) at (5,0);
    \coordinate (YAxisMin) at (0,-4);
    \coordinate (YAxisMax) at (0,4);
    \draw [thin, lightgray, -latex] (XAxisMin) -- (XAxisMax);
    \draw [thin, lightgray,-latex] (YAxisMin) -- (YAxisMax);
  \draw[style=help lines, opacity=0.5, step= 0.5,dashed, lightgray] (-3,-4) grid (5,4);
  \node[draw,mynode, label=above:{$1$}] (A1) at (0.5,-0.5) {};
  \node[draw,mynode, label=left:{$2$}] (A2) at (-2,-3) {};
  \node[draw,mynode, label=above:{$3$}] (A3) at (-2,2) {};
  \node[draw,mynode, label=above:{$2$}] (A4) at (3,0) {};
  \node[draw,mynode, label=above:{$1$}] (A5) at (1.5,3) {};
  \node[draw,mynode, label=above:{$3$}] (A6) at (5,2.5) {};
  \node[draw,mynode, label=right:{$3$}] (A7) at (3,-3) {};
  \path[multiple arrows={shorten >=0.5mm, shorten <=0.5mm, -latex}] 
  (A1) -- (A2) -- (A3) -- (A1) -- (A4) --(A3) -- (A5) -- (A4) --
  (A6) -- (A5);
  \path[multiple arrows={shorten >=0.5mm, shorten <=0.5mm, -latex}] 
  (A4) -- (A7) -- (A1);

  \path[multiple arrows={shorten >=0.5mm, shorten <=0.5mm, -latex}] 
  (A2) -- (A7);
  \fill[opacity=0.3, blue] (A1.center) -- (A2.center) -- (A3.center) -- cycle;
  \fill[opacity=0.3, red] (A1.center) -- (A4.center) -- (A3.center) -- cycle;
  \fill[opacity=0.3, red] (A1.center) -- (A2.center) -- (A7.center) -- cycle;
  \fill[opacity=0.3, blue] (A1.center) -- (A4.center) -- (A7.center) -- cycle;
  \fill[opacity=0.3, blue] (A4.center) -- (A5.center) -- (A3.center) -- cycle;
  \fill[opacity=0.3, red] (A4.center) -- (A5.center) -- (A6.center) -- cycle;
\end{tikzpicture}
\caption{The balanced simplicial complex from Example \ref{ex:simcomp6}, a $3$-coloration of its
  $1$-skeleton, and the induced orientations of its edges.\label{fig:color}}
\end{figure}

\begin{example}\label{ex:simcomp6}
Let $d=2$, $\mathcal A=(X Y^{-1}, X^{-4} Y^{-6}, X^{-4} Y^4, X^6, X^{3}Y^6, X^{10} Y^{5}, X^6Y^{-6})$, $\Gamma=\{ \{1,2,3\}, \{1,3,4\},\{3, 4, 5\}, \{4,5,6\}, \{1, 2,7\}, \{1, 4,7\}\}$. Choosing heights 

$$\begin{array}{rcl}
  \nu(1, -1) &=& 0\\
  \nu(-4, -6) &=& 0
\end{array}\quad
\begin{array}{rcl}
  \nu(-4, 4) &=& 0\\
  \nu(6,0) &=& 3\\
  \nu(3,6) &=& 5
\end{array}\quad
\begin{array}{rcl}
  \nu(10,5) &=& 10\\
  \nu(6,-6) &=& 2
\end{array}$$

provides us with a regular triangulation of $\mathcal A$ which has the balanced
subsimplicial complex described in Figure \ref{fig:color}.
Applying the construction of Theorem \ref{thm:nbpossols} and Proposition \ref{prop:orientcoloring}, we obtain the following system, depending on a parameter $t$:

$$\begin{array}{rcl}
f_1 &=& XY^{-1}- X^{-4} Y^4 + t^5X^{3}Y^6 - t^{10}X^{10} Y^{5} - t^2 X^6Y^{-6}\\
f_2 &=& X^{-4} Y^{-6} - X^{-4} Y^4 + t^3X^6 - t^{10}X^{10} Y^{5} - t^2 X^6Y^{-6}
\end{array}$$
which has at least six solutions in the positive orthant for $t$ sufficiently
small. Setting $t=1/1000$ and using Gr\"obner bases library {\tt FGb} \cite{FGB} and
the real solver {\tt rs\_isolate\_sys} \cite{RS} in Maple confirms that this system has indeed six positive solutions. 
\end{example}

At this point, we would like to recapitulate the relationship between the
properties of simplicial complexes studied in this section. As discussed above,
balanced simplicial complexes are always positively decorable (Proposition \ref{prop:orientcoloring}) and
the dual graph of positively decorable simplicial complexes is necessarily bipartite (Proposition \ref{prop:simplicialcomploriented}).
In summary:
$$\text{balanced}\,\Longrightarrow\,
\text{positively decorable}\,\Longrightarrow\,
\text{bipartite}.$$

By results of Joswig \cite{J}, for locally strongly connected simplicial
complexes which are simply connected, these three properties are equivalent.
However, it is not the case for simplicial complexes which do not verify these
assumptions as Figure \ref{fig:noteq} gives an example  of a simplicial complex whose dual
graph is bipartite but which is not balanced. The reader can verify that it is
positively decorable, hence positive decorability is not equivalent to balancedness.

We do not know an example of a pure simplicial complex of dimension $d$
embedded in $\R^d$ whose dual graph is bipartite but which is not positively
decorable, and we therefore ask the following question.

\begin{question}\label{qu:equiv}
  Is a pure full-dimensional simplicial complex positively decorable if and only if its dual graph is bipartite?
 If not, construct a counterexample.
\end{question}

\begin{figure}
  \centering
\begin{tikzpicture}[mynode/.style={circle, inner sep=1.5pt, fill=red}, scale =
  1]
  \node[draw,mynode] (A1) at (0,0) {};
  \node[draw,mynode] (A2) at (0,2) {};
  \node[draw,mynode] (A3) at (1,1) {};
  \node[draw,mynode] (A4) at (2,0) {};
  \node[draw,mynode] (A5) at (3,2) {};
  \node[draw,mynode] (A6) at (3,-1) {};
  \node[draw,mynode] (A7) at (1,-1) {};
  \fill[opacity=0.3, blue] (A1.center) -- (A2.center) -- (A3.center) -- cycle; 
  \fill[opacity=0.3, red] (A2.center) -- (A3.center) -- (A5.center) -- cycle; 
  \fill[opacity=0.3, blue] (A3.center) -- (A4.center) -- (A5.center) -- cycle; 
  \fill[opacity=0.3, red] (A4.center) -- (A6.center) -- (A5.center) -- cycle; 
  \fill[opacity=0.3, blue] (A4.center) -- (A6.center) -- (A7.center) -- cycle; 
  \fill[opacity=0.3, red] (A1.center) -- (A4.center) -- (A7.center) -- cycle; 
  \draw[black] (A1) -- (A2) -- (A5) -- (A6) --
  (A7) -- (A1) -- (A4) -- (A3) -- (A1); 
  \draw[black] (A2) -- (A3) -- (A5);
  \draw[black] (A3) -- (A4);
  \draw[black] (A4) -- (A5);
  \draw[black] (A7) -- (A4) -- (A6);
\end{tikzpicture}
\caption{A two-dimensional simplicial complex whose dual graph is bipartite but
  which is not balanced. The white triangle is not a $2$-simplex of the
  complex.\label{fig:noteq}}
\end{figure}
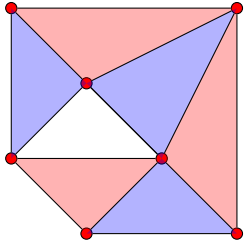

\section{Polytopes and maximally positive systems}\label{sec:maxpos}

Next, we turn our attention to finite sets which are supports of maximally positive
systems. We recall
that maximally positive systems have all their toric complex solutions in the
positive orthant.

\subsection{Order polytopes
} Here we recall the description given in \cite{SS}, which is itself based on
\cite{Sta}.  Let $P$ be any finite partially ordered set, a {\it poset} for
short, on $n$ elements.  A {\it chain} of $P$ is a subset which is totally
ordered. Two subsets $A,B$ of $P$ are called incomparable if for all  $(a,b)
\in A \times B$, the elements $a$ and $b$ are incomparable.  The {\it order
polytope} $\calO (P)$ of $P$ is the set of points $y \in [0,1]^P$ such that
$y_a \leq y_b$ whenever $a \leq b$ in $P$. 

\begin{example}\label{Ex:poset}
\begin{enumerate}
\item If all elements of $P$ are incomparable, then $\calO (P)=[0,1]^P$.
\item If $P$ is a totally ordered set with $d$ elements, then $\calO (P)$ is a primitive $d$-simplex. For instance, if $P=\{1,2,\ldots,n\}$ with usual increasing order, then $\calO (P)$
is the unit simplex with vertices the origin and $\sum_{i=k}^d \mathbf e_i$ for
$k=1,\ldots,d$, where $\mathbf e_i$ stands for the $i$-th vector of the canonical basis.
\item If $P=\{1,2,3\}$ with order given by $1 \leq 2$ and $1 \leq 3$, then $\calO (P)$ is the convex-hull of the set of points $(0,0,0)$, $(0,1,0)$, $(0,0,1)$, $(0,1,1)$, $(1,1,1)$.
\item If $P$ is a disjoint union of incomparable chains of lenghts $d_1,\ldots,d_k$, then the order polytope is isomorphic to a Cartesian product of unit simplices of dimensions $d_1,\ldots,d_k$.
\item More generally, if $P$ is a disjoint union of incomparable sub-posets $P_1,\ldots, P_k$, then $\calO (P) \simeq \prod_{i=1}^k {\calO (P)}_i$.
\end{enumerate}
\end{example}

A {\it linear extension} of $P$ is an order-preserving bijection from $P$ to
$[d]=\{1,2,\ldots,d\}$.  To each linear extension $\lambda$ of $P$, we
associate an unimodular $d$-dimensional simplex $\Gamma_{\lambda} \subset
\calO(P)$ defined by $0 \leq y_{a_1} \leq \cdots \leq y_{a_d} \leq 1$, where
$a_i=\lambda^{-1}(i)$. The vertices of $\Gamma_{\lambda}$ are $(0,\ldots,0)$
and $\sum_{i=k}^d \mathbf e_{\lambda^{-1}(i)}$ for $k=1,\ldots,d$, where
$(\mathbf e_a)_{a \in
P}$ stands for the canonical basis of $\R^P$.  The simplices $\Gamma_{\lambda}$
are the $d$-dimensional simplices of a triangulation $\Gamma(P)$ of $\calO (P)$
called {\it canonical triangulation}. Two $d$-dimensional simplices
$\Gamma_{\lambda}$ and $\Gamma_{\lambda'}$ in $\Gamma(P)$ have a common facet
if and only if there is transposition $\tau$ of $[d]$ such that $\lambda'=\tau
\circ \lambda$.  Fixing a linear extension of $P$ identifies each linear
extension of $P$ with a permutation of $[d]$, where the fixed linear extension
is identified with the identity permutation. The {\it sign} of a simplex
$\Gamma_{\lambda}$ is then defined as the sign of the corresponding
permutation. Note that this is defined up to the choice of a fixed linear
extension, another choice eventually changes simultaneously all signs. Thus,
the adjacency graph of $d$-dimensional simplices of $\Gamma(P)$ is bipartite.
It follows then from \cite[Corollary 11]{J} that $\Gamma(P)$ is balanced. In
fact, this can be shown directly by noting that the map $y \mapsto |y|$, (where
$|y|$ is the number of non-zero coordinates of $y$) restricts to a
$(d+1)$-coloration (with values in $\{0,1,\ldots,d\}$) on $\{0,1\}^P$ giving
different values to a pair of adjacent vertices of $\Gamma(P)$. It turns out
that the canonical triangulation $\Gamma(P)$  is also regular. A convex
function certifying the convexity of $\Gamma(P)$ is given by $\nu(y)=|y|^2$
\cite[Lemma 4.6]{SS}.  We now summarize these properties of $\Gamma(P)$.

\begin{proposition} \label{P:Ordergood}
\cite{SS}
The canonical triangulation of $\calO (P)$ is regular, unimodular, and balanced.
\end{proposition}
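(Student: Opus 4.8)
The plan is to verify the three claimed properties of the canonical triangulation $\Gamma(P)$ of the order polytope $\calO(P)$ one at a time, and in fact all of the needed ingredients have essentially been assembled in the discussion preceding the statement, so the proof will mostly consist of citing \cite{SS} and organizing those observations. First I would recall the definition of the simplices $\Gamma_\lambda$ indexed by linear extensions $\lambda$ of $P$: each is cut out by the chain of inequalities $0\le y_{\lambda^{-1}(1)}\le\cdots\le y_{\lambda^{-1}(d)}\le 1$, with explicit vertices $(0,\ldots,0)$ and $\sum_{i=k}^d\mathbf e_{\lambda^{-1}(i)}$, $k=1,\ldots,d$. That these $\Gamma_\lambda$ genuinely form a triangulation of $\calO(P)$ (they are full-dimensional, pairwise interiors disjoint, and cover $\calO(P)$) is the content of Stanley's work \cite{Sta}, which I would cite.

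For \emph{unimodularity}, I would compute the volume of a single $\Gamma_\lambda$: after subtracting the vertex $(0,\ldots,0)$, the edge vectors are $\sum_{i=k}^d\mathbf e_{\lambda^{-1}(i)}$ for $k=1,\ldots,d$, and the matrix with these columns is (up to reindexing rows by $\lambda$) unitriangular, hence has determinant $\pm 1$, so $\Vol(\Gamma_\lambda)=1$. Since this holds for every maximal simplex, $\Gamma(P)$ is unimodular. For \emph{balancedness}, I would invoke the explicit $(d+1)$-coloring mentioned in the text: the map $y\mapsto |y|$ counting nonzero coordinates takes values in $\{0,1,\ldots,d\}$ on the vertex set $\{0,1\}^P\cap\calO(P)$ of the triangulation, and two vertices joined by an edge of $\Gamma(P)$ differ by adding or removing a single coordinate (the vertices of any $\Gamma_\lambda$ are totally ordered by inclusion of supports, hence have distinct values of $|\cdot|$), so adjacent vertices get different colors; this exhibits the required coloration. (Alternatively, one may argue as in the text that the sign of $\Gamma_\lambda$, defined via the parity of the permutation obtained after fixing a reference linear extension, makes the dual graph bipartite — adjacent simplices differ by a transposition — and then appeal to \cite[Corollary 11]{J}, noting that $\Gamma(P)$ is a triangulation and hence locally strongly connected and simply connected.)

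For \emph{regularity}, I would use the convex function $\nu(y)=|y|^2=\sum_{a\in P}y_a^2$ restricted to $\calO(P)$, as in \cite[Lemma 4.6]{SS}: one checks that on each $\Gamma_\lambda$ the restriction of $\nu$ agrees with an affine function of $y$ — because on $\Gamma_\lambda$ the coordinates are nondecreasing along the chain $\lambda$, one can linearize $\sum y_a^2$ using the order — while $\nu$ is strictly convex on $\calO(P)$, so it is not affine across the union of two adjacent maximal simplices; this is precisely the defining property of a regular (coherent) triangulation with height function $\nu$. I would simply cite \cite[Lemma 4.6]{SS} for this verification rather than redo it.

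The proof presents no real obstacle: every component is either an elementary linear-algebra computation (unimodularity), an explicit combinatorial coloring already written down in the surrounding text (balancedness), or a cited lemma from \cite{SS} (regularity). The only point requiring a little care is making sure the coloring $y\mapsto|y|$ really distinguishes all pairs of adjacent vertices of the triangulation — this follows because the vertices of each maximal simplex $\Gamma_\lambda$ form a chain under coordinatewise order, so in particular any two of them, and a fortiori any two joined by an edge, have supports of different sizes. Thus the proof is essentially a bookkeeping exercise citing \cite{Sta, SS, J}.
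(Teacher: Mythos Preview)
Your proposal is correct and follows essentially the same approach as the paper: the paper does not give a separate proof environment for this proposition but instead assembles exactly the ingredients you list in the preceding discussion (unimodularity from the explicit vertices of $\Gamma_\lambda$, balancedness via the coloring $y\mapsto|y|$ or equivalently via bipartiteness plus \cite[Corollary~11]{J}, and regularity via $\nu(y)=|y|^2$ from \cite[Lemma~4.6]{SS}). One small inaccuracy to fix: the function $\nu(y)=\sum_a y_a^2$ is \emph{not} affine on each $\Gamma_\lambda$; what certifies regularity is that the heights $\nu$ assigns to the \emph{vertices} induce $\Gamma(P)$ as the projection of the lower convex hull, and the resulting piecewise-linear extension (not $\nu$ itself) is the convex function that is affine on each maximal simplex --- since you defer to \cite{SS} for this step anyway, just drop the parenthetical claim about linearizing $\sum y_a^2$.
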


\begin{theorem}\label{T:MainForOrder}
For any poset $P$, there exists a real polynomial system with Newton polytope $\calO (P)$ which is maximally positive.
\end{theorem}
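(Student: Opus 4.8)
The plan is simply to feed the combinatorial data supplied by Proposition~\ref{P:Ordergood} into Theorem~\ref{T:manypositive}. Put $d=|P|$ and identify $\R^P$ with $\R^d$. First I would take $\mathcal A$ to be the vertex set of the order polytope $\calO(P)\subset\R^P$. By Stanley's description, the vertices of $\calO(P)$ are precisely the indicator vectors $\mathbf 1_F\in\{0,1\}^P$ of the filters (up-sets) $F$ of $P$; hence $\mathcal A$ is a finite point configuration contained in $\{0,1\}^P\subset\Z^d$ with $\conv(\mathcal A)=\calO(P)$. Moreover $\calO(P)$ is full-dimensional in $\R^P$: picking any linear extension $\lambda$ of $P$, the point $y^\star$ with $y^\star_{\lambda^{-1}(i)}=i/(d+1)$ satisfies all the defining inequalities of $\calO(P)$ strictly, so a small ball around $y^\star$ lies in $\calO(P)$.

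Next I would record that the canonical triangulation $\Gamma(P)$ is a triangulation of $\conv(\mathcal A)=\calO(P)$ all of whose vertices lie in $\mathcal A$. Indeed, the vertices of a maximal simplex $\Gamma_\lambda$ are $\mathbf 0$ together with the partial sums $\sum_{i=k}^{d}\mathbf e_{\lambda^{-1}(i)}=\mathbf 1_{F_k}$, where $F_k=\{\lambda^{-1}(k),\dots,\lambda^{-1}(d)\}$ is a filter because $\lambda$ is order-preserving; conversely every filter arises as such an $F_k$ for a suitable $\lambda$ and $k$. By Proposition~\ref{P:Ordergood}, this triangulation $\Gamma(P)$ of $\calO(P)$ is regular, unimodular and balanced.

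Finally I would invoke the second assertion of Theorem~\ref{T:manypositive}: since $\mathcal A\subset\Z^d$ and $\Gamma(P)$ is a regular, unimodular, balanced triangulation of $\conv(\mathcal A)$, there exists a polynomial system with support $\mathcal A$ which is maximally positive, and its Newton polytope is $\conv(\mathcal A)=\calO(P)$. This completes the proof. I do not expect any genuine obstacle here: all of the analytic content has already been absorbed into Theorem~\ref{thm:nbpossols}, Proposition~\ref{prop:orientcoloring} and Proposition~\ref{P:Ordergood}, and the only point requiring a little care is the bookkeeping that identifies the vertices of $\calO(P)$ with the $0/1$-vectors occurring as vertices of the canonical triangulation, which is the standard description of order polytopes.
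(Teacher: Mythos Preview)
Your proof is correct and follows exactly the paper's approach: the paper's proof is the single sentence ``This follows readily from Theorem~\ref{T:manypositive} and Proposition~\ref{P:Ordergood},'' and you have merely made explicit the bookkeeping (full-dimensionality, that the vertices of $\Gamma(P)$ coincide with the vertices of $\calO(P)$, and that $\mathcal A\subset\Z^d$) that the paper leaves implicit.
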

\begin{proof}
This follows readily from Theorem \ref {T:manypositive} and Proposition \ref{P:Ordergood}.
\end{proof}

We can be more explicit. As we already saw, the map $(y_a)_{a \in P} \in \calO
(P) \cap \Z^P \mapsto |y|$ is a $(d+1)$-coloring map onto $\{0,1,\ldots,d\}$
giving distinct values to adjacent vertices of $\Gamma(P)$. Then, as shown in
the proof of Proposition \ref{prop:orientcoloring}, the triangulation
$\Gamma(P)$ is decorated by the matrix $C$ with column vector
$-(e_1+\cdots+e_d)$ corresponding to $(0,\ldots,0)$ and column vector $e_{|w|}$
for any other vertex $w$ of $\Gamma(P)$.  We now use the convex function
$y\mapsto |y|^2$, which shows that $\Gamma(P)$ is regular, to get the following
Viro polynomial system.

\begin{equation}\label{E:Viroposet}
  \sum_{w \in {\rm Vert}(\Gamma)\setminus \{(0,\ldots,0)\}}
t^{|w|^2}
e_{|w|}
X^{w}
-(e_1+\cdots+e_d)
=0.
\end{equation}
By Proposition \ref{P:maximally}, for $t>0$ small enough the system \eqref{E:Viroposet} is maximally positive. For instance, if $P=\{1,2,3\}$ with partial order defined by $1 \leq 2$, then
$\calO (P)$ is the convex-hull of the points $(0,0,0)$, $(0,1,0)$, $(1,1,0)$, $(0,0,1)$, $(0,1,1)$, $(1,1,1)$ (a prism) and \eqref{E:Viroposet} may be written as
$t(x_2+x_3)=t^4(x_1x_2+x_2x_3)=t^9x_1x_2x_3=1$.

\medskip

Systems of multidegree $(1,\ldots, 1)$ are a special case of Theorem
\ref{T:MainForOrder}: their support is the order polytope of a disjoint union
of incomparable chains (see Example \ref{Ex:poset}, item 4). This implies the
following statement.

\begin{corollary}
Let $d$ be any positive integer and $(d_1,\ldots,d_k)$ be any partition of $d$
into positive integers. Set $X=(X_1,\ldots,X_k)$, where $X_i=(X_{ij})_{1\leq j
\leq d_i}$ for $i=1,\ldots,k$.
Among multilinear (in other words multidegree $(1,\ldots,1)$) polynomial systems in $X=(X_1,\ldots,X_k)$, there exists a maximally polynomial system.
\end{corollary}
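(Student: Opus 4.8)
The plan is to recognize a multilinear system of multidegree $(1,\ldots,1)$ as living on (a unimodular copy of) the order polytope of a disjoint union of incomparable chains, and then to invoke the order‑polytope results established above, essentially Theorem~\ref{T:MainForOrder}, while being a bit careful about the actual support of the system produced by the construction.

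First I would pin down the Newton polytope. A multilinear system of multidegree $(1,\ldots,1)$ in the variable blocks $X_1,\ldots,X_k$ of sizes $d_1,\ldots,d_k$ has each equation supported on the lattice points of the product of standard simplices $Q=\Delta^{d_1}\times\cdots\times\Delta^{d_k}$, where $\Delta^{d_i}=\conv(\mathbf 0,\mathbf e_1,\ldots,\mathbf e_{d_i})$. By Example~\ref{Ex:poset}(4), $Q$ is unimodularly equivalent to the order polytope $\calO(P)$ of the poset $P$ that is the disjoint union of $k$ incomparable chains of sizes $d_1,\ldots,d_k$ (each chain contributing its ``staircase'' simplex $\{0\le y_1\le\cdots\le y_{d_i}\le 1\}$, which is unimodularly equivalent to $\Delta^{d_i}$ via the difference map $y\mapsto(y_1,y_2-y_1,\ldots,y_{d_i}-y_{d_i-1})$, a transformation in $\GL_{d_i}(\Z)$). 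Such a unimodular equivalence is realized by an invertible monomial substitution on $(\C^*)^d$, post‑composed with multiplication of each equation by a Laurent monomial; since an invertible monomial substitution is an automorphism of $(\C^*)^d$ restricting to an automorphism of $\R_{>0}^d$ and a diffeomorphism, it carries non‑degenerate toric solutions bijectively to non‑degenerate toric solutions and preserves membership in the positive orthant, while multiplying equations by monomials does not change the toric zero set. Hence maximal positivity is transported both ways, and it suffices to exhibit a maximally positive system supported on $\calO(P)\cap\Z^P$.

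Next I would run the order‑polytope machinery. By Proposition~\ref{P:Ordergood} the canonical triangulation $\Gamma(P)$ is regular (with convex certificate $\nu(y)=|y|^2$), unimodular and balanced, and by Proposition~\ref{prop:orientcoloring} the coloring $w\mapsto|w|$ yields a matrix $C_0$ (columns $\mathbf e_{|w|}$, and $-(\mathbf e_1+\cdots+\mathbf e_d)$ for the origin) that positively decorates every $d$‑simplex of $\Gamma(P)$. Because being ``oriented'' is an open condition on $d\times(d+1)$ real matrices (all signed maximal minors nonzero and of a common sign), a sufficiently small generic perturbation $C$ of $C_0$ still positively decorates every $d$‑simplex of $\Gamma(P)$ while having all entries nonzero. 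The associated Viro system~\eqref{E:Virosystem} then has each of its $d$ equations supported on all of $\calO(P)\cap\Z^P$, and by Proposition~\ref{P:maximally} it is maximally positive for $t>0$ small enough; transporting it back through the monomial substitution of the previous paragraph produces a maximally positive multilinear system of multidegree $(1,\ldots,1)$. (If one does not insist that every multilinear monomial actually occur, one may skip the perturbation and simply transport the explicit Wronski‑type system~\eqref{E:Viroposet}.)

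The two points needing genuine care---and I do not expect either to be a real obstacle---are: (i) replacing the ``foldable'' decoration $C_0$, which on its own yields an artificially sparse, Wronski‑like system, by a dense decoration $C$, handled by openness of the orientedness condition together with density of matrices with nonzero entries; and (ii) the invariance of maximal positivity under the monomial coordinate change realizing $Q\cong\calO(P)$, which is routine since that change is a bijective diffeomorphism of $(\C^*)^d$ stabilizing $\R_{>0}^d$. With these in hand the corollary is a direct consequence of Theorem~\ref{T:MainForOrder} (or of Theorem~\ref{T:manypositive} and Proposition~\ref{P:maximally}) applied to the disjoint union of chains.
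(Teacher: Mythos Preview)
Your proof is correct and follows the same route as the paper: recognize the multilinear support as (unimodularly equivalent to) the order polytope of a disjoint union of incomparable chains, and then invoke Theorem~\ref{T:MainForOrder}. You supply more detail than the paper does---in particular the explicit unimodular equivalence between $\Delta^{d_1}\times\cdots\times\Delta^{d_k}$ and $\calO(P)$ and why maximal positivity transports across it, plus the optional perturbation to force full support---but these are elaborations of the paper's one-line argument rather than a different approach.
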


There is another possible construction of maximally positive multilinear
systems, which uses strictly totally positive matrices (\emph{i.e.} matrices
whose all minors of any size is positive). Note that such matrices exist
\cite[Thm 2.7]{And}.  The interest of the following proposition is that it
gives a direct construction of maximally positive multilinear systems which
does not depend on a parameter $t$. However, it does not seem to generalize
easily to multi-homogeneous systems, while the construction above does (we give
a proof of this fact at
the end of this section).

\begin{proposition}
Let $d$ be any positive integer and $d_1+\dots + d_k = d$ be a partition of $d$
into positive integers. Then there exist strictly totally
positive matrices $(T^{(1)},\ldots, T^{(k)})$ of respective dimensions $(d_1+1)\times d,\ldots,
(d_k+1)\times d$ such that all toric complex solutions of the system $f_1(X) =
\dots = f_d(X) = 0$ lie in the positive orthant, where
$$f_i=\left((-1)^{d_1+1}T^{(1)}_{d_1+1,i}+\sum_{j=1}^{d_1} (-1)^j T^{(1)}_{j,i}X_{1j}\right)\times\dots\times\left((-1)^{d_k+1}T^{(k)}_{d_k+1,i}+\sum_{j=1}^{d_k} (-1)^j T^{(k)}_{j,i}X_{kj}\right).$$
\end{proposition}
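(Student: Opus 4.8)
The plan is to prove the stronger statement that \emph{every} choice of strictly totally positive matrices $T^{(1)},\dots,T^{(k)}$ of the prescribed sizes works; existence of such matrices --- which is all the proposition asserts --- then follows from \cite[Thm 2.7]{And}. Write $f_i=g_i^{(1)}(X_1)\cdots g_i^{(k)}(X_k)$, where $g_i^{(\ell)}(X_\ell)=(-1)^{d_\ell+1}T^{(\ell)}_{d_\ell+1,i}+\sum_{j=1}^{d_\ell}(-1)^jT^{(\ell)}_{j,i}X_{\ell j}$ is affine in the block $X_\ell=(X_{\ell1},\dots,X_{\ell d_\ell})$, and record its coefficients in the $d\times(d_\ell+1)$ matrix $C^{(\ell)}$ with $C^{(\ell)}_{i,j}=(-1)^jT^{(\ell)}_{j,i}$, so that $g_i^{(\ell)}(X_\ell)=\sum_{j=1}^{d_\ell}C^{(\ell)}_{i,j}X_{\ell j}+C^{(\ell)}_{i,d_\ell+1}$. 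For $S\subseteq\{1,\dots,d\}$, write $C^{(\ell)}_S$ for the submatrix of $C^{(\ell)}$ on the rows indexed by $S$.

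The first, and main, step is a sign bookkeeping that turns strict total positivity of $T^{(\ell)}$ into the ``oriented'' property of Section~\ref{sec:oriented}: for every $S$ with $|S|=d_\ell$ the $d_\ell\times(d_\ell+1)$ matrix $C^{(\ell)}_S$ is oriented, and for every $S$ with $|S|=d_\ell+1$ the square matrix $C^{(\ell)}_S$ is invertible. Indeed, pulling the factor $(-1)^{j'}$ out of each surviving column of $C^{(\ell)}_S$ and transposing shows that $\minor(C^{(\ell)}_S,j)$ equals, up to the global sign $(-1)^{(d_\ell+1)(d_\ell+2)/2-j}$, the minor of $T^{(\ell)}$ with column set $S$ and row set $\{1,\dots,d_\ell+1\}\setminus\{j\}$, which is positive; hence $(-1)^j\minor(C^{(\ell)}_S,j)=(-1)^{(d_\ell+1)(d_\ell+2)/2}\cdot(\text{positive})$ is nonzero with a sign independent of $j$, i.e. $C^{(\ell)}_S$ is oriented. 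The same computation applied to $\det C^{(\ell)}_S$ and a maximal minor of $T^{(\ell)}$ handles the case $|S|=d_\ell+1$. I expect this to be the crux: the alternating sign pattern built into the definition of $f_i$ is exactly what converts ``all minors of $T^{(\ell)}$ positive'' into ``all $d_\ell$-row submatrices of $C^{(\ell)}$ oriented''.

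It then remains to analyse the complex solution set, which a priori need not even be finite. Since each $f_i$ factors, a point $X^*=(X_1^*,\dots,X_k^*)$ is a solution if and only if for every $i$ some factor $g_i^{(\ell)}(X_\ell^*)$ vanishes; setting $S_\ell=\{\,i : g_i^{(\ell)}(X_\ell^*)=0\,\}$ we get $\bigcup_\ell S_\ell=\{1,\dots,d\}$. If $k\ge2$ and some $|S_\ell|\ge d_\ell+1$, then $X_\ell^*$ satisfies $C^{(\ell)}_S\cdot(X_{\ell1}^*,\dots,X_{\ell d_\ell}^*,1)^T=0$ for some $S$ of size $d_\ell+1$; invertibility of $C^{(\ell)}_S$ forces the last coordinate $1$ to be $0$, a contradiction (for $k=1$ one has $S_1=\{1,\dots,d\}$ trivially). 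Hence $|S_\ell|\le d_\ell$ for all $\ell$, and since $\sum_\ell|S_\ell|\ge|\bigcup_\ell S_\ell|=d=\sum_\ell d_\ell$, the tuple $(S_1,\dots,S_k)$ is an ordered partition of $\{1,\dots,d\}$ with $|S_\ell|=d_\ell$. For each $\ell$, $X_\ell^*$ then solves the square affine system $g_i^{(\ell)}(X_\ell)=0$, $i\in S_\ell$, whose coefficient matrix $C^{(\ell)}_{S_\ell}$ is oriented; in particular $\minor(C^{(\ell)}_{S_\ell},d_\ell+1)\neq0$, so this system has a unique complex solution, which by Proposition~\ref{prop:posorthantsimplex} --- applied with $d_\ell$ in place of $d$ and the point configuration $\{\mathbf e_1,\dots,\mathbf e_{d_\ell},\mathbf 0\}\subset\R^{d_\ell}$, for which $X^{\mathbf a_1},\dots,X^{\mathbf a_{d_\ell+1}}$ are $X_{\ell1},\dots,X_{\ell d_\ell},1$ and $\widetilde A$ is invertible --- lies in the positive orthant. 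Therefore every complex solution of $f_1=\dots=f_d=0$ has all coordinates positive; a fortiori, so does every toric complex solution, which is the claim. Beyond the sign computation, the points to watch are this finiteness/partition analysis and the routine verification of the hypotheses of Proposition~\ref{prop:posorthantsimplex} in the reduced setting.
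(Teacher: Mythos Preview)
Your argument is correct and in fact proves more than the proposition asks: you show that \emph{every} tuple of strictly totally positive matrices works, whereas the paper only shows that \emph{some} tuple does. The sign bookkeeping turning total positivity of $T^{(\ell)}$ into orientedness of every $d_\ell$-row slice $C^{(\ell)}_S$ (and invertibility of every $(d_\ell{+}1)$-row slice) is the same key observation used in the paper, only stated there less explicitly.

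The route, however, is genuinely different. The paper argues by counting: Kouchnirenko's theorem bounds the number of non-degenerate toric complex solutions by the multinomial coefficient $\binom{d}{d_1,\ldots,d_k}$; each ordered partition $(E_1,\ldots,E_k)$ with $|E_u|=d_u$ contributes one positive solution via the associated linear system; and a genericity argument on the space of totally positive tuples ensures these $\binom{d}{d_1,\ldots,d_k}$ positive solutions are distinct, forcing all toric solutions to be accounted for. Your approach bypasses both Kouchnirenko and the genericity step: you read the factorization $f_i=\prod_\ell g_i^{(\ell)}$ directly, use invertibility of the $(d_\ell{+}1)$-row slices to force the vanishing pattern $(S_1,\ldots,S_k)$ to be an exact ordered partition, and then orientedness of $C^{(\ell)}_{S_\ell}$ pins down each block $X_\ell^*$ uniquely and positively. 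This is more elementary, handles degenerate and non-toric solutions as well (you even get finiteness of the full complex solution set for free), and yields the stronger universal statement. The paper's approach, on the other hand, makes the count $\binom{d}{d_1,\ldots,d_k}$ explicit and ties the construction visibly to the ``maximally positive'' theme via Kouchnirenko.
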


\begin{proof} By Kouchnirenko's theorem, the number of complex toric
  non-degenerate solutions is bounded by the multinomial coefficient
  $\binom{d}{d_1,\ldots,d_k}$. 
For any partition of the set $\{1,\ldots,d\}$ into $k$ parts $E_1\cup\dots\cup
E_k$ of respective sizes $n_1,\ldots, n_k$, we associate the linear system
$\ell_1(X) = \dots=\ell_d(X) = 0$, where $$ \ell_i (X) =
(-1)^{d_u+1}T^{(u)}_{d_u+1,i}+\sum_{j=1}^{d_u} (-1)^j T^{(u)}_{j,i}X_{1j},
\text{ where $u\in\N$ is such that $i\in E_u$.}$$ By construction, $\ell_i(X)$
divides $f_i(X)$, hence a solution of the linear system is a solution of
$f_1(X) = \dots = f_d(X) = 0$. Next, note that the condition that the matrices
$T$ are totally positive imply that the linear system $\ell_1(X) =
\dots=\ell_d(X) = 0$ has a unique solution in the positive orthant. There are
$\binom{d}{d_1,\ldots,d_k}$ possible partitions of $\{1,\ldots,d\}$ into $k$
parts $E_1\cup\dots\cup E_k$ of respective sizes $d_1,\ldots, d_k$. Each of
them yields one solution in the positive orthant. 
To finish the proof, we
prove that there exist $(T^{(1)},\ldots, T^{(k)})$ such that all these solutions are distinct. This is done by noticing that the set of $t$-uples of
strictly totally positive matrices is a non-empty open subset for the Euclidean
topology of $t$-uples of matrices with real entries, while the set of such
tuples of matrices leading to coalescing solutions is Zariski closed and hence
has Lebesgue measure $0$. 
\end{proof}

The vertices of $\calO (P)$ are characteristic functions of upper order ideals of $P$.
The set of such upper order ideals ordered by inclusion is a distributive lattice.
The toric ideal of the set of integer points of the order polytope $\calO (P)$
is generated by binomials $x_Jx_K-x_{J \wedge K}x_{J \vee K}$ over all
incomparable upper order ideals $J,K$ of $P$, where $J \wedge K=J \cap K$ and
$J \vee K=J \cup K$ \cite{H}. These binomials are homogeneous binomials with
exponents at most~$2$ (in fact at most~$1$). Thus Bihan's conjecture
(see the introduction) holds true for order polytopes.

\bigskip

We finish this section by reporting on other classical families of
polytopes which admit unimodular balanced regular triangulations.
We stress that in all the following cases, Bihan's conjecture \cite{B} holds.

\medskip

{\bf Cross polytopes.} The $d$-dimensional cross polytope is the subset of $\R^d$ defined by
points verifying $\lvert x_1\rvert+\dots +\lvert x_d\rvert\leq 1$. It
has normalized volume $2^d$ and has a regular unimodular triangulation
obtained by slicing it along the coordinate hyperplanes $x_i=0$ for
$i\in\{1,\ldots, d\}$. Associating a sign to each of the simplex of the
triangulation counting the parity of the number of negative
coordinates provides a $2$-coloring of its dual graph. By Proposition~\ref{prop:orientcoloring} this triangulation is balanced. 

\medskip

{\bf Products of balanced triangulations.} If $\mathcal A_1$, $\mathcal
A_2$ are two point configurations which admit regular balanced unimodular
triangulations, then the product of these triangulations yields a regular
balanced subdivision of the convex hull of $\mathcal A_1\times\mathcal A_2$
into products of simplices. Joswig and Witte show in \cite{JW} that this
subdivision can be refined into a regular balanced unimodular triangulation.
Consequently, if $\mathcal A_1$ and $\mathcal A_2$ admit regular balanced and unimodular triangulation, then so does $\mathcal A_1\times \mathcal A_2$.
\medskip

{\bf Joins of balanced triangulations.}
If $P_1 \subset \R^{d_1}$, $P_2 \subset \R^{d_2}$ are two full-dimensional polytopes admitting regular unimodular balanced triangulations, then the natural triangulation of the join $P_ 1 \star  P_2
\subset \R^{d_1+d_2+1}$ by joins $\sigma_1 \star \sigma_2$ of full-dimensional simplices in the triangulations of $P_1$ and $P_2$ is regular and unimodular
(see \cite[Section 2.3.2]{HPPS}). The fact that this triangulation is balanced can be seen by coloring the vertices of the triangulation of
$P_1$ by $\{0,\ldots,d_1\}$ and the vertices of the triangulation of $P_2$ by $\{d_1+1,\ldots, d_1+d_2+2\}$.
\medskip

{\bf Alcoved polytopes.} Alcoved polytopes are polytopes whose codimension
$1$ faces lie on hyperplanes of the form $x_i - x_j = \ell$, with $\ell\in\N$.
Lam and Postnikov \cite{LamPos} showed that such polytopes admit a natural
unimodular triangulation compatible with the subdivision of $\R^d$ given by the
complements of all the hyperplanes of the form $x_i - x_j = \ell$, for $\ell\in\N$. 
The fact that this triangulation of alcoved polytopes is regular and balanced is a special case of Lemma \ref{L:regbal} below.

Let $\{b_1,\ldots,b_s\} \subset \Z^d$ be a collection of vectors.
This induces an infinite arrangement $\mathcal H$ of hyperplanes
$\{x \in \R^d \, | \, \langle b_i, x \rangle=k \}$
for all $i=1,\ldots, s$ and $k \in \Z$.
Let $Q$ be a $d$-dimensional lattice polytope in $\R^d$. Assume that each
$(d-1)$-dimensional face of $Q$ is contained in some hyperplane of~$\mathcal
H$. Consider the subdivision $\Gamma$ of $Q$ obtained by slicing $Q$ along the
hyperplanes of $\mathcal H$.

\begin{lemma}\label{L:regbal}
The subdivision $\Gamma$ is regular and its dual graph is bipartite.
\end{lemma}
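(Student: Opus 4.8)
The plan is to establish the two assertions of Lemma~\ref{L:regbal} separately, as they require different ideas. For \emph{regularity}, the natural candidate for a convex certifying function is a ``squared distance'' type potential built from the linear forms $\langle b_i, \cdot\rangle$. Specifically, I would try $\nu(x) = \sum_{i=1}^s \varphi(\langle b_i, x\rangle)$, where $\varphi:\R\to\R$ is a convex function that is affine on each interval $[k,k+1]$ for $k\in\Z$ (for instance, $\varphi(u) = \lceil u\rceil^2 - (\lceil u\rceil - u)\cdot$(something), or more cleanly, the piecewise-linear interpolation of $k\mapsto k^2$ — this is exactly the generalization of the $|y|^2$ trick used for order polytopes in \cite[Lemma 4.6]{SS} and for alcoved polytopes). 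Each such $\varphi\circ\langle b_i,\cdot\rangle$ is convex and piecewise-linear, affine on each cell of the slicing induced by $\{x : \langle b_i,x\rangle = k\}$; summing over $i$ yields a convex piecewise-linear function affine on each cell of $\Gamma$, and one checks it is \emph{strictly} convex across every interior wall (when crossing a hyperplane $\langle b_i,x\rangle = k$ at least one summand has a genuine break, since the slope of $\varphi$ jumps at every integer). This shows $\Gamma$ is the regular subdivision induced by $\nu$.

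For \emph{bipartiteness of the dual graph}, the key observation is that two top-dimensional cells of $\Gamma$ sharing a facet are separated by exactly one hyperplane of $\mathcal H$, say $\langle b_i, x\rangle = k$. I would assign to each top-dimensional cell $\sigma$ the integer vector $\bigl(\lfloor \langle b_1, x_\sigma\rangle\rfloor, \ldots, \lfloor\langle b_s, x_\sigma\rangle\rfloor\bigr) \in \Z^s$ for $x_\sigma$ in the interior of $\sigma$ (well-defined since no $\langle b_i, x\rangle$ is an integer on the interior of $\sigma$), and then take the parity $\varepsilon(\sigma) = \sum_{i=1}^s \lfloor\langle b_i, x_\sigma\rangle\rfloor \bmod 2$. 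When passing from $\sigma$ to an adjacent cell $\sigma'$ across the wall $\langle b_i, x\rangle = k$, exactly one coordinate of this vector changes — and it changes by $\pm 1$ — while all others stay fixed (a generic short segment crossing the wall meets no other hyperplane of $\mathcal H$). Hence $\varepsilon(\sigma') \neq \varepsilon(\sigma)$, so $\varepsilon$ is a proper $2$-coloring of the dual graph, which is therefore bipartite.

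The main obstacle I anticipate is the \emph{strict} convexity check across interior walls in the regularity argument: one must rule out that the breaks of the various summands $\varphi\circ\langle b_i,\cdot\rangle$ conspire to cancel, producing a function that is merely affine across some wall and thus fails to single out $\Gamma$ rather than a coarsening. This is handled by noting that on any interior wall contained in $\{\langle b_i,x\rangle = k\}$, the summand indexed by $i$ strictly ``bends upward'' (the interpolating function $\varphi$ has a strictly positive jump in slope at every integer), while every other summand is affine in a neighborhood of that wall's relative interior — so no cancellation is possible and the sum is strictly convex there. A minor subtlety is that the $b_i$ need not be pairwise non-parallel, so two different indices could cut the same family of hyperplanes; this only \emph{helps} (more bending), so it causes no trouble. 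Once strict convexity across every interior wall is in hand, the standard equivalence between regular subdivisions and lower faces of the lifted polytope finishes the proof, and the bipartiteness coloring $\varepsilon$ completes the statement.
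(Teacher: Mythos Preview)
Your proposal is correct and matches the paper's proof in both halves. For regularity the paper uses the height function $f(x)=\sum_{i=1}^{s}\langle b_i,x\rangle^{2}$ restricted to the vertices of $\Gamma$ (citing \cite[Theorem~2.4]{HPPS}), the smooth counterpart of your piecewise-linear $\sum_i\varphi(\langle b_i,x\rangle)$; for bipartiteness the paper assigns to each chamber the product over $i$ of alternating signs on the slabs of $\R^d\setminus\bigcup_{k\in\Z}\{\langle b_i,x\rangle=k\}$, which is exactly your parity $\varepsilon(\sigma)=\sum_i\lfloor\langle b_i,x_\sigma\rangle\rfloor\bmod 2$ written multiplicatively.
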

\begin{proof}
The regularity of $\Gamma$ is obtained by considering the restriction to the vertices of $\Gamma$ of $f(x)= \sum_{i=1}^s \langle b_i, x \rangle^2$ (see \cite[Theorem 2.4]{HPPS}).
It remains to show that the dual graph of $\Gamma$ is bipartite. In fact we show that the dual graph of the subdivision of $\R^n$ obtained by slicing along the hyperplanes of $\mathcal H$ is bipartite. For any given $i=1,\ldots,s$, associate a sign to each connected component of
$$\R^d \setminus \cup_{k \in \Z}
\{x \in \R^d \, | \,  \langle b_i, x \rangle =k \}$$ so
that adjacent components get opposite signs.
Any connected component of $\R^d \setminus {\mathcal H}$ is a common
intersection of such connected components for $i=1,\ldots,s$, and we equip it
with the product of the corresponding signs. Clearly, two adjacents components
of $\R^d \setminus {\mathcal H}$ have opposite signs.
\end{proof}

\medskip

{\bf The hypersimplex.} For $d,k \in\N$, $k\leq d$, the hypersimplex
$\Delta_{d,k}$ is the convex hull of all vectors in $\R^d$ whose
coordinates are $k$ ones and $d-k$ zeros. Several unimodular
triangulations of the hypersimplex have been proposed. One is given by
Sturmfels in \cite{St2}. Another one is described
by Stanley in \cite{Sta2}. As explained in \cite[Section 2.3]{LamPos}, the
hypersimplex is linearly equivalent to an alcoved polytope. Consequently, it admits a unimodular balanced and regular triangulation.

\medskip

{\bf Multi-homogeneous systems.} A multi-homogeneous system with respect to a
partition $d_1+\dots+d_k = d$ and with multi-degrees $(\ell_1,\ldots, \ell_k)$
is a system whose monomial support correspond to the set of lattice points in
the polytope $\ell_1\,\Delta_{d_1}\times \dots \times \ell_k\,\Delta_{d_k}$,
where $\Delta_{d_i}$ is the convex hull of $\{\mathbf 0, \mathbf e_1,\ldots,
\mathbf e_{d_i}\}$ in $\R^{d_i}$.
Using the construction by Joswig and Witte \cite{JW}, it is therefore
sufficient to show that $\ell\,\Delta_d \subset \R^d$ admits a regular balanced
unimodular triangulation for any $d, \ell\in\N$. Under the linear change of variables
$z_i = x_1+\cdots+x_i$, we see that $\ell\,\Delta_d \subset \R^d$ is defined by inequalities
$z_1 - z_0 \geq 0$, $z_{i+1} - z_i \geq 0$ for $i=1,\ldots,d-1$,  $z_d - z_0 \leq \ell$
and $z_0 = 0$. Thus as explained in \cite[Section 2.3]{LamPos}, slicing $\R^{d+1}$ by hyperplanes
$z_j-z_i=k$ for $1 \leq i <j \leq d$ and $k \in \Z$ (together with $z_0=0$) gives a unimodular triangulation of $\ell\,\Delta_d \subset \R^d$.
This triangulation is regular and balanced by Lemma \ref{L:regbal}.
Itenberg and Viro constructed another interesting regular unimodular triangulation of $\ell\,\Delta_d$ by induction on the dimension $d$ \cite[Section 5]{IV}.
This triangulation is obtained by taking joins of dilates of the simplex $\Delta_{d-1}$. Using the property that the natural triangulation
of a join of polytopes equipped with regular, unimodular and balanced triangulations is also regular, unimodular and balanced, one can easily show inductively
that the triangulation used in \cite{IV} is balanced.

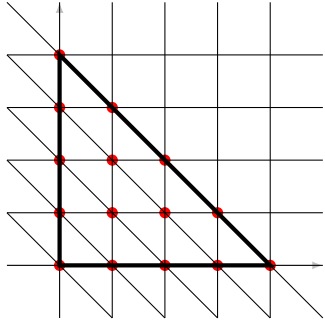
\begin{figure}\centering
\begin{tikzpicture}[mynode/.style={circle, inner sep=1.5pt,
  fill=red}, scale = 0.7]
    \coordinate (Origin)   at (0,0);
    \coordinate (XAxisMin) at (-1,0);
    \coordinate (XAxisMax) at (5,0);
    \coordinate (YAxisMin) at (0,-1);
    \coordinate (YAxisMax) at (0,5);
    \draw [thin, lightgray, -latex] (XAxisMin) -- (XAxisMax);
    \draw [thin, lightgray,-latex] (YAxisMin) -- (YAxisMax);
  \foreach \ai in {0,...,4} 
     \foreach \aj in {\ai,...,4}
     {\draw (\ai,4-\aj) node[mynode] {};}
  \draw[ultra thick] (0,0) -- (0,4) -- (4,0) -- cycle;
  \foreach \a in {0,...,4}
  {\draw[thin] (\a, -1) -- (\a, 5);
    \draw[thin] (-1, \a) -- (5, \a);
    \draw[thin] (\a+1,-1) -- (-1,\a+1);
 }

\end{tikzpicture}
\caption{A regular balanced unimodular triangulation of
  $4\,\Delta_2$\label{fig:triangd2l4}}
\end{figure}

\section{Fewnomial systems with many positive solutions}\label{sec:cyclic}

In this section, we consider the following classical problem.
\begin{problem}
Let $d,k\in\N$ be integers. Let $\Xi_{d,k}$ be the maximum number of
non-degenerate solutions in the positive orthant over all polynomial
systems of $d$ equations in $d$ variables with real coefficients involving at most $d+k+1$ monomials. Find lower
and upper bounds on $\Xi_{d,k}$.
\end{problem}

Khovanskii \cite{Khov} proved that $\Xi_{d,k}$ is bounded from above by a
function of $d$ and $k$, and he proved that
$\Xi_{d,k}\leq 2^{\binom{d+k}2}(d+1)^{d+k}$ \cite{Khov}. This bound was later improved by Bihan and Sottile \cite{BS} to
$$\Xi_{d,k}\leq \dfrac{e^2+3}{4} 2^{\binom{k}2}d^k.$$
This is currently the best known upper bound. Bihan, Rojas and Sottile \cite{BRS} proved the following lower bound:
$$(\lfloor d/k\rfloor+1)^k\leq \Xi_{d,k},$$
or more generally, if $d_1+\dots+d_k=d$ is a partition of $d$, then $\prod_{1\leq i\leq k}(d_i+1)\leq \Xi_{d,k}$.

A system consisting of $d$ dense univariate polynomials of degree $\lfloor
k/d\rfloor+1$ proves the inequality:
$$(\lfloor k/d\rfloor+1)^d\leq \Xi_{d,k}.$$
More generally, if $k_1+\dots+k_d=k$ is a partition of $k$, then $\prod_{1\leq i\leq\ell}(k_i+1)\leq\Xi_{d,k}$.

It is natural to study the sharpness of these two last bounds. The
first one is sharp when $k=1$ by \cite{B}. The second bound is sharp when
$d=1$ by Descartes' rule of signs. When $k=d$, these two bounds give $2^d\leq \Xi_{d,d}$.

In order to construct systems with many positive solutions, we now turn our
attention to subsimplicial complexes of triangulations of
cyclic polytopes. These triangulations are good candidates since they enjoy a large
number of simplices compared to their number of vertices.

\begin{definition}
  Let $a_1<a_2<\dots<a_n$ be $n$ real numbers. For $d,n\in\N$, the cyclic polytope $C(n,d)$ associated to $(a_1,\ldots, a_n)$ is the convex hull in $\R^d$ of the points $\{(a_i,a_i^2,\ldots, a_i^d)\}_{1\leq i\leq n}$. The combinatorial structure of $C(n,d)$ is independent of the choice of the real numbers $a_1,\ldots, a_n$.
\end{definition}

Projecting the lower part of $C(n,d+1)$ with respect to the last coordinate onto the first $d$ coordinates yields a regular triangulation of $C(n,d)$.
This triangulation denoted by $\hat{\mathbf 0}_{n,d}$ is the minimum element
for the \emph{higher Stasheff-Tamari orders} on $C(n,d)$ \cite{Ede}.
We now recall the description of $\hat{\mathbf 0}_{n,d}$ given in \cite[Lemma
2.3]{Ede} and provide a proof for the reader's convenience.
In what follows, we represent a $d$-simplex of $\hat{\mathbf 0}_{n,d}$
by the increasing sequence of $d+1$ integer numbers between $1$ and $n$ which labels
its vertices. 

\begin{proposition}\label{prop:cyclic}
If $d$ is odd, the $d$-simplices of $\hat{\mathbf 0}_{n,d}$ are 
$$1 \leq i_{1} < i_{1}+1 <i_{2} < i_{2}+1< \cdots <i_{\frac{d+1}{2}} < i_{\frac{d+1}{2}}+1 \leq n,$$ while for even $d$ they are the simplices
$$1 = i_{1} <i_{2} < i_{2}+1< \cdots <i_{\frac{d}{2}} < i_{\frac{d}{2}}+1 \leq n.$$
The triangulation $\hat{\mathbf 0}_{n,d}$ is regular, and its number of
$d$-simplices is
$$\begin{cases}
\binom{n-(d+1)/2}{(d+1)/2}\text{ if $d$ is odd} \\
\\
\binom{n-1-d/2}{d/2}\text{ if $d$ is even.}
\end{cases}$$
\end{proposition}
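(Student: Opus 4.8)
The plan is to prove this in three parts: first characterize which simplices of $C(n,d)$ appear in $\hat{\mathbf 0}_{n,d}$, then deduce regularity, then count.

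For the combinatorial description, I would use Gale's evenness criterion for the facets of the cyclic polytope together with the fact that $\hat{\mathbf 0}_{n,d}$ is obtained as the projection of the lower faces of $C(n,d+1)$. A subset $\sigma\subset\{1,\ldots,n\}$ of size $d+1$ determines a lower facet of $C(n,d+1)$ (hence a $d$-simplex of $\hat{\mathbf 0}_{n,d}$) precisely when, among the elements of $\{1,\ldots,n\}\setminus\sigma$, those lying strictly between two elements of $\sigma$ come in ``even blocks'' (Gale evenness applied to the lower facets, which distinguishes the sign of the last coordinate of the normal). Translating this condition: write $\sigma$ as an increasing sequence; a maximal gap between consecutive chosen vertices must have even size, \emph{except possibly} the initial gap (below the smallest element of $\sigma$) and the final gap (above the largest), whose parity is constrained by $d$. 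Sorting out the parity bookkeeping — using $|\sigma|=d+1$ and $n-|\sigma|=n-d-1$ — yields exactly the two stated forms: for odd $d$ the chosen vertices group into $(d+1)/2$ consecutive pairs $i_j,i_j+1$; for even $d$ one forces $i_1=1$ and then $d/2$ consecutive pairs follow. Alternatively, and perhaps more cleanly for a self-contained proof, I would verify directly that the simplices listed do tile $C(n,d)$: they are pairwise interior-disjoint (their defining conditions are mutually exclusive in any fixed coordinate direction) and their volumes sum to $\Vol(C(n,d))$; but the Gale-evenness route is shorter.

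For regularity, I can simply invoke the earlier remark in the excerpt: $\hat{\mathbf 0}_{n,d}$ is by construction the projection of the lower envelope of $C(n,d+1)$ with respect to the last coordinate, so the last coordinate function $x\mapsto x_{d+1}=a_i^{d+1}$ restricted to the vertices serves as the convex height function witnessing regularity. This is immediate from the definition of regular triangulation recalled in Section~\ref{sec:oriented}; no real work is needed here beyond pointing to the construction.

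For the count, once the description is in hand it is a routine ``stars and bars'' computation. For odd $d$: choosing the $d$-simplex amounts to choosing the $(d+1)/2$ starting points $i_1<i_2<\cdots<i_{(d+1)/2}$ of the consecutive pairs, subject to $i_{j}+1<i_{j+1}$ and $i_{(d+1)/2}+1\leq n$; substituting $i_j'=i_j-(j-1)$ turns this into choosing $(d+1)/2$ distinct values in $\{1,\ldots,n-(d+1)/2\}$, giving $\binom{n-(d+1)/2}{(d+1)/2}$. For even $d$, $i_1=1$ is forced and one counts the $d/2$ pairs among the remaining slots $\{2,\ldots,n\}$ with the same shift, yielding $\binom{n-1-d/2}{d/2}$. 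The main obstacle is the first part: getting the parity conditions in Gale's criterion exactly right for the \emph{lower} facets (as opposed to all facets), in particular correctly handling the boundary gaps and the case distinction between odd and even $d$ — this is where a careless sign or off-by-one error would propagate. Everything after that is bookkeeping.
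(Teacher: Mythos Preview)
Your overall plan---characterize the lower facets of $C(n,d+1)$ via Gale's criterion, then count by stars and bars---is a legitimate route, but it is not the paper's route, and the facet condition you state is wrong. You claim $\sigma$ is a lower facet when the elements of $[n]\setminus\sigma$ lying strictly between two elements of $\sigma$ ``come in even blocks.'' Test it on $d=1$, $n=4$, $\sigma=\{1,4\}$: the interior gap is $\{2,3\}$, of even size, so your condition is satisfied; yet $\hat{\mathbf 0}_{4,1}=\{\{1,2\},\{2,3\},\{3,4\}\}$ and $\{1,4\}$ is an \emph{upper} facet of $C(4,2)$. The correct condition concerns maximal runs \emph{in} $\sigma$, not gaps: for odd $d$ every maximal run of consecutive integers in $\sigma$ must have even length; for even $d$ one needs $1\in\sigma$, the run through $1$ of odd length, and all other runs even. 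That does unwind to the listed forms, and your counting argument then goes through unchanged. So the error is precisely the ``careless parity slip'' you anticipated, but it is a real one in the sketch as written.

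The paper avoids Gale's criterion and the upper/lower distinction altogether with a short analytic argument. It first observes that any two monic degree-$(d+1)$ polynomials differ by a polynomial of degree $\le d$, which is an affine function on the degree-$d$ moment curve; hence every monic $P$ of degree $d+1$ induces the \emph{same} regular triangulation of $C(n,d)$, namely $\hat{\mathbf 0}_{n,d}$. Now for a candidate simplex $\{j_1<\cdots<j_{d+1}\}$, choose $P(T)=\prod_k (T-a_{j_k})$: the simplex belongs to the lower envelope iff $P(a_i)\ge 0$ for all $i$, and the sign chart of $P$ on $\R$ gives the stated description immediately (the negative intervals must contain no $a_i$, forcing $j_{2}=j_1+1$, $j_4=j_3+1$, etc., with the additional $j_1=1$ when $d$ is even because $P\to-\infty$ at $-\infty$). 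This handles the description and regularity in one stroke, is entirely self-contained, and produces the height function explicitly---whereas your approach, once corrected, still requires importing Gale's criterion and a separate lemma distinguishing lower from upper facets.
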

\begin{proof}
In the following, we show that $\hat{\mathbf 0}_{n,d}$ is the regular triangulation of $C(n,d)$ associated with the height function $(a_i,a_i^2,\ldots, a_i^d) \mapsto P(a_i)$, where $P(x)=x^{d+1}$.
All monic univariate polynomials $P$ of degree
$d+1$ give rise to  height functions defining the same triangulation $\hat{\mathbf 0}_{n,d}$.
Indeed, if $P_1$ and $P_2$ are two such polynomials, then $P_1-P_2$ is a polynomial of degree $d$ and can therefore be expressed as an affine function on the rational normal
curve of degree~$d$. 
Let $1 \leq j_1<\cdots<j_{d+1} \leq n$ be any maximal simplex of $\hat{\mathbf 0}_{n,d}$. The intervals of $\R$ where the univariate polynomial
$P(T)=(T-a_{j_1})\cdots(T-a_{j_{d+1}})$ is nonnegative is the union
$$\begin{cases}
  [a_{j_1},a_{j_2}]\,\cup\,[a_{j_3},a_{j_4}]\,\cup\,\dots\cup\,[a_{j_d},a_{j_{d+1}}]\text{\quad if $d$ is odd, or}\\
  ]-\infty,a_{j_1}]\,\cup\,[a_{j_2},a_{j_3}]\,\cup\,\dots\cup\,[a_{j_{d}},a_{j_{d+1}}]\text{\quad if $d$ is even.}
\end{cases}$$
The condition that $P$ is nonnegative on $\{a_1,\ldots, a_n\}$ is equivalent to
saying that none of the $a_i$ belongs to these intervals. This implies that
every 
simplex has the form claimed in Proposition \ref{prop:cyclic}. For instance, if $d$ is even, it implies that
$a_1$ is a vertex of all the $d$-simplices. Since $P$ is positive
on $\{a_1,\ldots, a_n\}\setminus\{a_{j_1},\ldots, a_{j_{d+1}}\}$, the simplex
$1 \leq j_1<\ldots<j_{d+1} \leq n$ belongs to the regular triangulation induced
by the polynomial $P(T) =(T-a_{j_1})\cdots(T-a_{j_{d+1}})$. But we showed that
this triangulation does not depend on the choice of the monic polynomial $P$
and is therefore $\hat{\mathbf 0}_{n,d}$.
\end{proof}

Unfortunately, the triangulation $\hat{\mathbf 0}_{n,d}$ cannot be positively
decorated (except in trivial cases) since its dual graph is not bipartite.

\begin{example}\label{E:O63}
The triangulation $\hat{\mathbf 0}_{6,3}$ of the cyclic
polytope $C(6,3)$ consists of the following tetrahedra (together with their faces):
$A=1 <2 < 3 < 4$, $B=1< 2 < 4 < 5$, $C= 1 <2 < 5< 6$, $D=2 < 3 < 4<5$
$E=2 < 3 < 5 < 6$, $F= 3 < 4< 5 < 6$. Its dual graph is shown in Figure \ref{F:dualgraph}.
It is not bipartite since it contains cycles of length $3$.
\end{example}

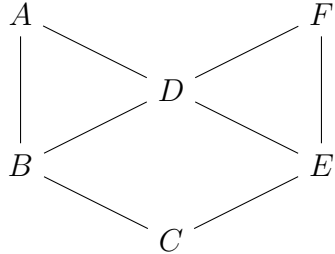
\begin{figure}[htb]
\begin{center}
  \begin{tikzpicture}[scale=2]
   \node (A) at (-1, 1.5) {$A$};
   \node (B) at (-1,0.5) {$B$};
   \node (C) at (0,0) {$C$};
   \node (D) at (0,1) {$D$};
   \node (E) at (1,0.5) {$E$};
   \node (F) at (1,1.5) {$F$};
\draw (A) -- (B) -- (C) -- (E) -- (F) -- (D) -- (A);
\draw (B) -- (D) -- (E);
  \end{tikzpicture}
\caption{Dual graph of $\hat{\mathbf 0}_{6,3}$.}  
\label{F:dualgraph}
\end{center}
\end{figure}
\bigskip

We now restrict our attention to the case where $d$ is odd. We represent
any $d$-simplex $1 \leq i_{1} < i_{1}+1 <i_{2} < i_{2}+1< \cdots <i_{\frac{d+1}{2}} < i_{\frac{d+1}{2}}+1 \leq n$ of $\hat{\mathbf 0}_{n,d}$ by the sequence $1\leq i_1<\dots< i_{\frac{d+1}{2}}\leq n-1$. Note that $i_j - i_{j+1}\geq 2$ for $j=1,\ldots, \frac{d+1}{2}$. Consider the subsimplicial $d$-dimensional complex ${\mathbf S}_{n,d}$ of $\hat{\mathbf 0}_{n,d}$ whose maximal simplices are the $d$-simplices
$1\leq i_1<\dots< i_{\frac{d+1}{2}}\leq n-1$ such that for all $j$, either $i_j$ is odd, or $i_{j+1} - i_j > 2$.

\begin{proposition}\label{prop:cyclicbipartite}
For $d$ odd, the dual graph of the simplicial complex ${\mathbf S}_{n,d}$ is
bipartite and the number of $d$-simplices of ${\mathbf S}_{n,d}$ equals the coefficient of $X^n Y^{(d+1)/2}$ in the series expansion of the rational function
$$ \dfrac{1+X+X^3 Y}{1-X^2-X^2 Y-X^4 Y}.$$
\end{proposition}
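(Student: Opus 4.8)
The plan is to translate everything through a dictionary between maximal simplices of $\hat{\mathbf 0}_{n,d}$ (with $d$ odd; put $m=(d+1)/2$) and $0$--$1$ strings: encode a $d$-simplex $\tau$ by $s(\tau)\in\{0,1\}^n$ with $s(\tau)_i=1$ iff $i$ is a vertex of $\tau$. Writing the vertices as $v_1<\cdots<v_{d+1}$ and the domino representation as $i_k=v_{2k-1}$, Proposition~\ref{prop:cyclic} says: $\tau$ is a $d$-simplex of $\hat{\mathbf 0}_{n,d}$ iff $s(\tau)$ has $d+1$ ones and every maximal run of ones has even length (equivalently $v_{2k}=v_{2k-1}+1$ for all $k$). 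First I would read off the condition defining ${\mathbf S}_{n,d}$: ``$i_j$ odd or $i_{j+1}-i_j>2$'' is automatic unless $i_{j+1}-i_j=2$, i.e.\ unless the $j$-th and $(j+1)$-th dominoes lie in a common run of ones, in which case it forces $i_j$ to be odd; letting $j$ range over a run of $2l$ consecutive ones, this says precisely that such a run begins at an odd index whenever $l\ge 2$. Hence $\tau\in{\mathbf S}_{n,d}$ iff $s(\tau)$ has $d+1$ ones, every run of ones has even length, and every run of ones of length $\ge 4$ begins at an odd index.

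For bipartiteness, recall that two maximal simplices $\tau,\tau'$ are adjacent in the dual graph iff $|\tau\cap\tau'|=d$, i.e.\ iff $s(\tau)$ and $s(\tau')$ differ in exactly two positions, a one $u$ of $\tau$ and a one $v$ of $\tau'$. The key lemma is that then $|u-v|=2$. Say $u<v$ and let $[c,e]$ be the run of $s(\tau)$ containing $u$. Deleting the one at $u$ leaves exactly one run of odd length, which re-inserting one bit at $v$ must turn even; a short case analysis on the position of $u$ in $[c,e]$ shows that for $v>u$ the only possibility is $v=e+1$, with $[c,e]$ replaced by $[c,u-1]\cup[u+1,e+1]$ (read as $[c+1,e+1]$ if $u=c$) --- in particular $u-c$ is even. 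Now suppose $[c,e]$ has length $\ge 4$; then $c$, and hence $u$, is odd because $\tau\in{\mathbf S}_{n,d}$, so unless $u=e-1$ the new run $[u+1,e+1]$ has length $\ge 4$ and begins at the even index $u+1$ (the same conclusion holds if it merges with its successor run across a gap of length $1$), contradicting $\tau'\in{\mathbf S}_{n,d}$. Hence either $[c,e]$ has length $2$, forcing $u=c$ and $v=c+2$, or $u=e-1$ and $v=e+1$; in both cases $v=u+2$, and symmetrically $v=u-2$ when $v<u$. Reverting to the domino encoding, the swap $\{u\}\leftrightarrow\{u\pm 2\}$ changes exactly one $i_k$ by $\pm1$, so $\sum_k i_k$ flips parity along every dual-graph edge of ${\mathbf S}_{n,d}$. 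Therefore $\tau\mapsto\bigl(\sum_k i_k(\tau)\bigr)\bmod 2$ is a proper $2$-colouring and the dual graph is bipartite.

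For the enumeration I would give a canonical factorisation of the strings $s(\tau)$, $\tau\in{\mathbf S}_{n,d}$, with $m$ dominoes. Group $\{1,\dots,n\}$ into the consecutive pairs $\{1,2\},\{3,4\},\dots$ Every run of length $\ge 4$ is then a union of pairs (it begins at an odd index and has even length), and a gap contributes only pairs of zeros, so the only feature straddling the grid is a run of length $2$ sitting in the second half of one pair and the first half of the next; being maximal, it is flanked by zeros, so reads $0110$ on those two pairs, and distinct such straddling runs share no pair. Cutting $s(\tau)$ between consecutive pairs everywhere except where a straddling run forbids it therefore yields, for $n$ even, a unique word in $\{\,00,\ 11,\ 0110\,\}$; and for $n$ odd the last index $n$ is either a lone zero --- or, since every run of length $\ge 4$ ends at an even index, the top $\{n-1,n\}$ of a length-$2$ run, so that $s(\tau)$ ends in $011$ --- and stripping the suffix $0$ or $011$ reduces to the even case on $n-1$ resp.\ $n-3$ indices. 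Weighting $00,11,0110$ by $X^2,X^2Y,X^4Y$ ($Y$ marking dominoes), sequences of pieces have generating function $1/(1-X^2-X^2Y-X^4Y)$, while the odd-$n$ correction contributes the factor $1+X+X^3Y$; the coefficient of $X^nY^m$ with $m=(d+1)/2$ then counts the $d$-simplices of ${\mathbf S}_{n,d}$.

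The main obstacle is the lemma ``$|u-v|=2$'' inside the bipartiteness proof: one has to describe all facet-adjacencies of $\hat{\mathbf 0}_{n,d}$ in string terms and verify that the ${\mathbf S}_{n,d}$-constraint kills every long-range flip, leaving only the nearest-neighbour moves $u\leftrightarrow u\pm 2$. Identifying the defining inequalities in the first step and the generating-function bookkeeping in the last step are then routine.
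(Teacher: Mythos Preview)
Your proof is correct. For bipartiteness you reach exactly the paper's conclusion---the parity of $\sum_k i_k$ alternates along dual-graph edges---but you get there by a more explicit route: the paper observes in one line that the symmetric difference of the two $i$-sets must be an interval $\{k+1,\ldots,k+2\ell\}$ and then invokes the ${\mathbf S}_{n,d}$ condition to force $\ell=1$, whereas you unpack the same phenomenon via runs in the $0$--$1$ string and show directly that the two differing vertex positions satisfy $|u-v|=2$. Your version is longer but makes the mechanism visible, and in particular it justifies the ``interval'' claim that the paper leaves to the reader.

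For the enumeration the approaches genuinely diverge. The paper simply asserts the recurrence $c_{n,d}=c_{n-2,d}+c_{n-2,d-2}+c_{n-4,d-2}$ and solves for the generating function. You instead give a canonical factorisation of the admissible strings into words over the alphabet $\{00,\,11,\,0110\}$ (with a suffix $0$ or $011$ when $n$ is odd), which produces the rational function directly by the sequence construction. The two are of course equivalent---peeling off the first letter of your word \emph{is} the paper's recurrence---but your bijective version explains where each term of the denominator comes from and handles the parity of $n$ cleanly via the numerator $1+X+X^3Y$, whereas the paper has to extract the initial conditions separately.
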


\begin{proof} Two simplices $1\leq
i_1<\dots< i_{(d+1)/2}\leq n$ and $1\leq i'_1<\dots< i'_{\frac{d+1}{2}}\leq
n$ of ${\mathbf S}_{n,d}$ are adjacent if and only if the two sets $\{i_1, i_1+1,i_2,
i_2+1,\ldots, i_{\frac{d+1}{2}},i_{\frac{d+1}{2}}+1\}$ and $\{i'_1, i'_1+1,i'_2,
i'_2+1,\ldots, i'_{\frac{d+1}{2}},i'_{\frac{d+1}{2}}+1\}$ share $d$ elements in
common. This implies that the symmetric difference of the sets
$\{i_1,\dots, i_{\frac{d+1}{2}}\}$ and $\{i'_1,\dots, i'_{\frac{d+1}{2}}\}$ is an
interval $I=\{k+1,\ldots, k+2\ell\}$ for some integers $k$ and $\ell$. Since
$i_j$ is odd, or $i_{j+1}-i_j>2$, the cardinality of $I$ must equal two. Hence, 
in ${\mathbf S}_{n,d}$, the parity of $i_1+\dots+i_{\frac{d+1}{2}}$ alternates
between adjacent $d$-simplices. Consequently, the dual graph is bipartite.

Let $c_{n,d}$ denote the number of $d$-simplices in $\mathbf S_{n,d}$. It is
easily verified that $c_{n,d}$ verifies the recurrence relation $c_{n,d} =
c_{n-2,d}+ c_{n-2,d-2} + c_{n-4, d-2}$. Let $G(X,Y) = \sum_{i,j\geq 0} c_{n,d}
X^n Y^d$ be the associated generating series. The linear
recurrence and the initial conditions imply that $(1-X^2-X^2Y^2-X^4Y^2) G(X,Y) =
(1+X+X^3 Y^3)$. Consequently, $c_{n,d}$ equals the coefficient of $X^n Y^d$ in
the series expansion of $$G(X,Y) = \dfrac{Y+XY+X^3 Y^3}{1-X^2-X^2Y^2-X^4Y^2}.$$
\end{proof}

The simplicial complex ${\mathbf S}_{n,d}$ is bipartite but it is not balanced
in general, as demonstrated by the following example.

\begin{example}\label{E:S63}
The simplicial complex ${\mathbf S}_{6,3}$ is obtained by removing
$D$ from  $\hat{\mathbf 0}_{6,3}$, see Example \ref{E:O63}.
It is easily seen that this simplicial complex is not balanced although its dual graph is bipartite.
In fact the star of the vertex $4$ contains the tetrahedra $A$, $B$, and $F$
and its star is thus not connected. Consequently, the simplicial complex is not locally strongly connected. 
\end{example}

\begin{corollary}\label{coro:bivseries}
For integers $i,j\in\N$ and a rational function $S\in\Z(X,Y)$, we let
$[X^i Y^j]S(X,Y)$ denote the coefficient of $X^i Y^j$ in the formal
series expansion of $S$. If the simplicial complex $\mathbf S_{2 d+1,d}$
 is
positively decorable, then for $d$ odd,
$$[X^{2 d +1} Y^{(d+1)/2}]\dfrac{1+X+X^3 Y}{1-X^2-X^2 Y - X^4 Y}\leq \Xi_{d,d}.$$
\end{corollary}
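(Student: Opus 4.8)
The plan is to assemble the statement from Theorem~\ref{thm:nbpossols}, Proposition~\ref{prop:cyclic} and Proposition~\ref{prop:cyclicbipartite}: positive decorability of ${\mathbf S}_{2d+1,d}$ produces, via the Viro construction, an explicit polynomial system that competes in the definition of $\Xi_{d,d}$, and whose number of positive solutions is exactly the number of maximal simplices of ${\mathbf S}_{2d+1,d}$.

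First I would fix $2d+1$ increasing positive integers, say $a_i=i$ for $1\le i\le 2d+1$, and let $\mathcal A=\{(a_i,a_i^2,\ldots,a_i^d)\}_{1\le i\le 2d+1}\subset\Z^d$ be the corresponding vertex set of the cyclic polytope $C(2d+1,d)$. Since each $a_i$ is a positive integer, the monomials $X^{\mathbf a_i}$ are genuine (non-Laurent) monomials in $X_1,\ldots,X_d$, and there are exactly $2d+1=d+d+1$ of them. By Proposition~\ref{prop:cyclic}, $\Gamma:=\hat{\mathbf 0}_{2d+1,d}$ is a regular triangulation of the convex hull of $\mathcal A$, with an associated convex height function $\nu$ (for instance $\nu(\mathbf a_i)=a_i^{d+1}$ as in the proof of Proposition~\ref{prop:cyclic}); and by construction ${\mathbf S}_{2d+1,d}$ is a subcomplex of $\Gamma$ whose maximal simplices are maximal simplices of $\Gamma$.

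Next, by the hypothesis that ${\mathbf S}_{2d+1,d}$ is positively decorable, there is a $d\times(2d+1)$ real matrix $C$ for which $({\mathbf S}_{2d+1,d},C)$ is a positively decorated simplicial complex, i.e.\ $C$ positively decorates every maximal simplex of ${\mathbf S}_{2d+1,d}$. Applying Theorem~\ref{thm:nbpossols} to the regular triangulation $\Gamma$, its height function $\nu$, and the matrix $C$, we get that for $t>0$ small enough the Viro system~\eqref{E:Virosystem} has support contained in $\mathcal A$ and at least as many non-degenerate solutions in the positive orthant as there are maximal simplices of $\Gamma$ positively decorated by $C$. Every maximal simplex of ${\mathbf S}_{2d+1,d}$ is such a simplex, so this number is at least the number of maximal simplices of ${\mathbf S}_{2d+1,d}$, which by Proposition~\ref{prop:cyclicbipartite} equals $[X^{2d+1}Y^{(d+1)/2}]\frac{1+X+X^3Y}{1-X^2-X^2Y-X^4Y}$.

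Finally I would observe that, for a fixed admissible small $t$, system~\eqref{E:Virosystem} is a system of $d$ polynomial equations in $d$ variables with real coefficients involving at most $|\mathcal A|=2d+1=d+d+1$ monomials, hence by the very definition of $\Xi_{d,d}$ its number of non-degenerate positive solutions is at most $\Xi_{d,d}$; chaining the two inequalities yields the claim. I do not expect a serious obstacle: the substance lies entirely in Theorem~\ref{thm:nbpossols} and Proposition~\ref{prop:cyclicbipartite}, both already established, and the only points deserving a moment of care are that the cyclic polytope's vertices can be chosen with positive integer coordinates (so the $X^{\mathbf a_i}$ are honest monomials and the count $2d+1$ fits the $k=d$ regime), and that the maximal simplices of ${\mathbf S}_{2d+1,d}$ are genuinely maximal simplices of the ambient regular triangulation $\Gamma$ rather than merely faces of it, so that Theorem~\ref{thm:nbpossols} applies to them verbatim.
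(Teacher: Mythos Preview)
Your argument is correct and matches the paper's intended reasoning: the corollary is stated without proof there, being an immediate consequence of Theorem~\ref{thm:nbpossols} applied to the regular triangulation $\hat{\mathbf 0}_{2d+1,d}$ together with the simplex count of Proposition~\ref{prop:cyclicbipartite}, exactly as you spell out. Your additional care in choosing integer $a_i$ so that the monomials are genuine polynomials and in noting that the maximal simplices of $\mathbf S_{2d+1,d}$ are maximal simplices of the ambient regular triangulation is well placed and fills in details the paper leaves implicit.
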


The best known lower bound on $\Xi_{d,d}$ is $2^d\leq \Xi_{d,d}$. 
The following proposition shows that if $\mathbf S_{2 d+1,d}$ is positively
decorable,
then we would obtain the following sharper lower bound on $\Xi_{d,d}$.

\begin{proposition}
  For $d$ odd, let $c_d$ denote the number of $d$-simplices in $\mathbf S_{2 d +1, d}$. As
  $d$ grows, 
  $$c_d \sim \dfrac{(\sqrt{2}+1)^d}{\sqrt{d}}\,\,\dfrac{2^{1/4} (1+\alpha)}{4 \alpha\sqrt{\pi}},$$
  where $\alpha=3-2\sqrt{2}$. Consequently, $\lim c_d^{1/d} = \sqrt{2}+1.$
\end{proposition}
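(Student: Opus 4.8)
The plan is to first collapse the bivariate coefficient of Proposition~\ref{prop:cyclicbipartite} into a single univariate coefficient extraction, and then analyse the latter by the saddle-point method.

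Write $d=2m-1$, so that $(d+1)/2=m$ and $2d+1=4m-1$. By Proposition~\ref{prop:cyclicbipartite},
$$c_d=[X^{4m-1}Y^m]\,\frac{1+X+X^3Y}{1-X^2-X^2Y-X^4Y}.$$
The denominator $D:=1-X^2(1+Y)-X^4Y$ contains only even powers of $X$, hence so does $1/D$; since $4m-1$ is odd, only the summands $X/D$ and $X^3Y/D$ of the rational function contribute, giving $c_d=[X^{4m-2}Y^m](1/D)+[X^{4m-4}Y^{m-1}](1/D)$. Substituting $X^2=U$ and using the factorisation $D=(1-U)-U(1+U)Y$, a geometric series in $Y$ yields $[Y^k](1/D)=U^k(1+U)^k/(1-U)^{k+1}$; extracting the relevant power of $U$ from each of the two terms and adding the resulting fractions, one gets the compact form
\begin{equation}\label{eq:cd-univ}
c_d=2\,[U^{m-1}]\,\frac{(1+U)^{m-1}}{(1-U)^{m+1}},\qquad m=\frac{d+1}{2}.
\end{equation}

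Next I would write \eqref{eq:cd-univ} as a Cauchy integral: with $\phi(U)=\dfrac{1+U}{U(1-U)}$ and $q(U)=\dfrac{1}{U(1-U)^2}$ one has $c_d=\dfrac{2}{2\pi i}\oint_{|U|=\rho}q(U)\,\phi(U)^{m-1}\,dU$ for any $0<\rho<1$, and I would take $\rho=U_0$, the unique critical point of $\phi$ in $(0,1)$. Solving $\phi'(U)=0$ reduces to $U^2+2U-1=0$, so $U_0=\sqrt2-1$, and one computes $\phi(U_0)=3+2\sqrt2=(1+\sqrt2)^2$, $U_0^2=\alpha=3-2\sqrt2$, $(1-U_0)^2=2U_0^2=2\alpha$, and $(\log\phi)''(U_0)=\sqrt2/\alpha$. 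Since $\phi$ has the Laurent expansion $\phi(U)=U^{-1}+2+2U+2U^2+\cdots$, the modulus $|\phi(U_0e^{i\theta})|$ attains a strict maximum on $[-\pi,\pi]$ at $\theta=0$; this legitimises the classical Laplace estimate, and the Gaussian integration around $\theta=0$ produces
$$c_d\sim\frac{2\,q(U_0)\,U_0\,\phi(U_0)^{m-1}}{\sqrt{2\pi(m-1)\,U_0^2(\log\phi)''(U_0)}}=\frac{(1+\sqrt2)^{d-1}}{2^{1/4}\,\alpha\,\sqrt{\pi}\,\sqrt{d-1}},$$
where I use $q(U_0)U_0=1/(1-U_0)^2=1/(2\alpha)$, $U_0^2(\log\phi)''(U_0)=\sqrt2$, $\phi(U_0)^{m-1}=(1+\sqrt2)^{2(m-1)}=(1+\sqrt2)^{d-1}$, and $m-1=(d-1)/2$.

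Finally I would rewrite $(1+\sqrt2)^{d-1}=(\sqrt2-1)(1+\sqrt2)^d$ and replace $\sqrt{d-1}$ by $\sqrt d$ in the equivalent, reducing the claim to the elementary identity $4(\sqrt2-1)=\sqrt2(1+\alpha)$ (indeed $\sqrt2(1+\alpha)=\sqrt2(4-2\sqrt2)=4\sqrt2-4$), which turns the constant $\dfrac{\sqrt2-1}{2^{1/4}\alpha\sqrt\pi}$ into $\dfrac{2^{1/4}(1+\alpha)}{4\alpha\sqrt\pi}$, establishing the stated asymptotic equivalence; the equality $\lim c_d^{1/d}=\sqrt2+1$ then follows at once because the polynomial and constant factors each contribute $1$ to the limit. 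The only step that is not entirely routine is the rigorous control of the tail of the saddle-point integral away from $\theta=0$, but it follows directly from the positivity of all non-leading Laurent coefficients of $\phi$ noted above; alternatively one may recognise $c_d$ as twice a central-Delannoy-type sum $\sum_i\binom{m-1}{i}\binom{2m-1-i}{m}$ and invoke the known asymptotics of such sequences (this is the route suggested by the acknowledgement to Bostan and Dumont).
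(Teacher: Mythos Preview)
Your argument is correct. The reduction to the closed form $c_d=2\,[U^{m-1}](1+U)^{m-1}/(1-U)^{m+1}$ checks out (and reproduces the table values $2,8,38,\dots$), and the saddle-point computation at $U_0=\sqrt2-1$ is carried out accurately; the aperiodicity argument via the positive Laurent coefficients of $\phi$ is exactly what is needed to control the tails.

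Your route differs genuinely from the paper's. The paper keeps the bivariate rational function, performs a parity decomposition to land on a diagonal of a rational series $U(X,Y)$, extracts that diagonal by the residue/pole method to obtain the explicit algebraic generating function
\[
D(X)=\tfrac12\Bigl(\tfrac{1+X}{\sqrt{X^2-6X+1}}-1\Bigr),
\]
and then reads off the asymptotics of $[X^{(d+1)/2}]D(X)$ by Flajolet--Sedgewick singularity analysis at $\alpha=3-2\sqrt2$. You instead perform the parity reduction and the $Y$-extraction \emph{by hand}, collapsing everything to a single ``large-powers'' coefficient, and then apply the saddle-point method directly. Your approach is more elementary---it avoids the diagonal-of-a-rational-function machinery entirely---and makes the Delannoy-type binomial sum $c_d=2\sum_i\binom{m-1}{i}\binom{2m-1-i}{m}$ visible. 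The paper's approach, on the other hand, produces an explicit closed-form generating function for the full sequence $(c_d)$, which is informative in its own right and from which higher-order terms of the asymptotic expansion can be read off mechanically.
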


\begin{proof}
For a bivariate series $G(X,Y)$, let $H(X,Y)$ and $U(X,Y)$ be the series such that 
$H(X^2,Y)=(X G(X)-XG(-X))/2$ and $U(X^2,Y)=(H(X,Y)+H(-X,Y))/2$. Note that, if
we set $k=(d+1)/2$, then for $d$ odd, we have 
$$[X^k Y^k] U(X,Y) = [X^{2 d+1}Y^{(d+1)/2}] G(X,Y).$$ 

If $G$ is the series in Corollary \ref{coro:bivseries}, then $U(X,Y)$ equals
$$U(X,Y)=\dfrac{X (X Y^2-2 Y-1)}{1+X^2 Y^2+(-Y^2-4 Y-1)X}.$$

The diagonal series $D(Z)=\sum_{i\geq 0} ([X^i
Y^i] U(X,Y)) Z^i$ is obtained by considering $U(X/Y,Y)/Y$ as a univariate
function
with coefficients in $\R(X)$ and by examinating its poles which tend to zero as $X$ goes to
zero, see \emph{e.g.} \cite[Section 4]{BDS} and references therein.
In this setting, $U(X/Y,Y)/Y$ has three poles:
$$0,\quad\dfrac{X^2-4 X+1 - \sqrt{(X-1)^2 (X^2 -6 X+1)}}{2 X},\quad\dfrac{X^2-4
  X+1 + \sqrt{(X-1)^2 (X^2 -6 X+1)}}{2 X}.$$
Only the first two poles tend to zero as $X$ goes to zero, and the sum of their
residues gives the desired algebraic series
$$D(X) = \frac 12\left(\dfrac{1+X}{\sqrt{X^2-6 X +1}} - 1\right).$$
The series $D(X)$ is analytic at $0$.
By \cite[Theorem IV.7]{FS}, the asymptotic exponential behavior of the
coefficients of the
series expansion is then dictated by its complex singularity nearest to the origin.
In our case, the singularity of $D(X)$ with smallest complex module is
$\alpha = 3-2\sqrt{2}$. Consequently, we obtain that
$$\limsup_{k\rightarrow\infty} \left([X^k Y^k] U(X,Y)\right)^{1/k} =
\frac{1}{3-2\sqrt{2}}=\left(\sqrt 2 +1\right)^2.$$
Using the relation $k=(n+1)/2$ proves the second statement of the proposition.
In order the prove the first statement, we need to look more precisely at the
nature of the singularity of $D(X)$ at $X=3-2\sqrt{2}$. First, we note that
$\sqrt{\alpha-X}\, D(X)$ is analytic at $\alpha$, hence
$$D(X)=\dfrac{1}{\sqrt{\alpha-X}}\sum_{i\in\N} a_i \, (\alpha-X)^i,$$
for some real values $a_i\in\R$.
Next, using a Taylor expansion around $0$, we get for all $i\in\N$: 
$$[X^k]\dfrac{(\alpha-X)^i}{\sqrt{\alpha -X}} = \dfrac{(2k - 2i -1)!!}{2^k\,
k!}\underset{k\rightarrow\infty}{\sim} \dfrac{1}{\alpha^k\,(2 k)^{i}\sqrt{\alpha \pi k}}.$$
Evaluating
$(\alpha-X)\, D(X)$ at $0$ provides us with the value of $a_0$. Finally we get
$$[X^k]D(X)\underset{k\rightarrow\infty}{\sim}\dfrac{1+\alpha}{4\,2^{1/4}}\,\,\dfrac{1}{\alpha^k\sqrt{\alpha\pi
k}}$$
and using $c_d=[X^{(d+1)/2}] D(X)$ concludes the proof.\end{proof}

We report in Table \ref{table:compar} the number of simplices in $\mathbf
S_{2d+1, d}$ for the first odd values of $d$.

\begin{table}
\centering
\begin{tabular}{|@{\quad\quad}c@{\quad\quad}||r|r|}
\hline
$d$ & $2^d$ & $c_d$ \\
\hline
1& 2& 2\\ 
3& 8& 8\\
5& 32& 38\\
7& 128& 192\\
9& 512& 1002\\
11& 2048& 5336\\
13& 8192& 28814\\
15&32768&157184\\
17& 131072& 864146\\
19& 524288& 4780008\\
21& 2097152& 26572086\\
\hline
\end{tabular}
\caption{For $d$ odd, comparison between $2^d$, the best known lower bound on
  $\Xi_{d,d}$ with the number of simplices $c_d$ in the simplicial complex
  $\mathbf S_{2d+1,d}$. \label{table:compar}}
\end{table}

\begin{example}\label{ex:C63}
The simplicial complex for the cyclic polytope $C(6,3)$ is
positively decorable. Ordering the vertices with respect to their relative
position on the rational normal curve of degree $3$, a coefficient matrix which
decorates the simplicial complex is
$$
\begin{bmatrix}
1 &0&3&-4& 0 &-1\\
-2&1&1&0 & 0 &-1\\
0 &0&3&-3& 1 &-3
\end{bmatrix}
$$

By Theorem \ref{thm:nbpossols}, this implies that for $t>0$ sufficiently small, the polynomial system
$$\begin{array}{rcl}
  1+3 t^{2^4} X^2Y^{2^2}Z^{2^3} - 4 t^{3^4} X^3Y^{3^2}Z^{3^3} - t^{5^4} X^5Y^{5^2}Z^{5^3} &=& 0\\
  -2+t X Y Z + t^{2^4} X^2Y^{2^2}Z^{2^3} - t^{5^4} X^5Y^{5^2}Z^{5^3} &=& 0\\
   3 t^{2^4} X^2Y^{2^2}Z^{2^3} - 3 t^{3^4} X^3Y^{3^2}Z^{3^3} + t^{4^4}
   X^4Y^{4^2}Z^{4^3}- 3 t^{5^4} X^5Y^{5^2}Z^{5^3} &=& 0
\end{array}$$
has at least $5$ real solutions in the positive orthant. Consequently,
$5\leq \Xi_{3,2}$.
\end{example}

We end this section by a special case of Question \ref{qu:equiv}: 

\begin{question}
  For $d$ odd, is the bipartite simplicial complex $\mathbf S_{2d+1,d}$
  always positively decorable?
\end{question}

We verified that it is the case for $d=1,3,5$.
A general positive answer to this question would imply that
$\underset{d\rightarrow\infty}\limsup\, \left(\Xi_{d,d}\right)^{1/d}\geq \sqrt 2+1$.

\section{Realizable oriented matroids and positive matrix
completion}\label{sec:comput}

In this section, we study the problem of decorating positively a simplicial complex from a
computational viewpoint and we exhibit a connection to the problems
of the realizability of oriented matroids and low-rank matrix completion problem.
We start by the following characterization of oriented matrices (compare with Proposition \ref{P:invariant}).

\begin{proposition} \label{P:invariantbis}
Let $M$ be a full rank $d\times (d+1)$ matrix with real entries. The following statements are equivalent:
\begin{enumerate}
\item $M$ is an oriented matrix;
\item If $v_1,\ldots,v_{d+1}$ are the columns vectors of $M$, then the common intersection of half spaces $H_i^+=\{x \in \R^d \; , \; \langle v_i, x \rangle \geq 0\}$
for $i=1,\ldots,d+1$ is reduced to the zero vector.
\end{enumerate}
\end{proposition}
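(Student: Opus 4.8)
The plan is to deduce this from Proposition~\ref{P:invariant}. Since $M$ has full rank $d$ and $d+1$ columns, its kernel is a line, and by that proposition $M$ is oriented if and only if $\ker M$ contains a vector all of whose coordinates are nonzero and of the same sign, equivalently a vector lying in the open positive orthant of $\R^{d+1}$. On the other hand, condition~(2) says exactly that the dual cone of the convex cone $C$ generated by the column vectors $v_1,\dots,v_{d+1}$ is $\{0\}$. So the statement is a theorem-of-the-alternative of Stiemke type: either $\ker M$ meets the open positive orthant, or there is a nonzero $x$ with $\langle v_i,x\rangle\ge 0$ for all $i$, but not both. I will prove the two implications separately, keeping the argument self-contained.

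For $(1)\Rightarrow(2)$, I would use Proposition~\ref{P:invariant} to fix $y=(y_1,\dots,y_{d+1})\in\ker M$ with every $y_i>0$, so that $\sum_i y_i v_i=0$. If $x\in\bigcap_{i}H_i^+$, then $0=\langle\sum_i y_i v_i,\,x\rangle=\sum_i y_i\langle v_i,x\rangle$ is a sum of nonnegative terms, hence $\langle v_i,x\rangle=0$ for all $i$; as $M$ has full rank the $v_i$ span $\R^d$, so $x=0$. This direction is routine.

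For $(2)\Rightarrow(1)$, the first step is to show that $(2)$ forces $C=\R^d$. The cone $C$ is closed (being finitely generated) and convex; if it were a proper subset of $\R^d$ we could choose $p\notin C$ and, since $C$ is a cone through the origin, separate $p$ from $C$ by a hyperplane through $0$, obtaining a nonzero $x$ with $\langle v_i,x\rangle\ge 0$ for all $i$, contradicting~$(2)$. So $C=\R^d$. Now for each index $k$, write $-v_k$ as a nonnegative combination $-v_k=\sum_i c_i v_i$ of the columns and move $v_k$ to the left: this exhibits a nonzero vector $z^{(k)}\in\ker M$ with all coordinates $\ge 0$ and $k$-th coordinate $\ge 1$. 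Since $\ker M$ is one-dimensional, any two of the $z^{(k)}$ are proportional, and being nonzero and nonnegative they must be positive scalar multiples of one another. Consequently $z^{(1)}$ has \emph{every} coordinate strictly positive, because for each $k$ its $k$-th coordinate is a positive multiple of the $k$-th coordinate of $z^{(k)}$, which is at least $1$. Thus $\ker M$ contains a strictly positive vector, and Proposition~\ref{P:invariant} yields that $M$ is oriented.

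The only real obstacle is the claim $C=\R^d$ in the second implication; it rests on the separating hyperplane theorem together with the standard observation that for a closed convex cone the separating functional may be taken to vanish at the origin. One could instead invoke the bipolar theorem or quote Stiemke's lemma outright, but the approach above stays within the elementary toolkit already used for Proposition~\ref{P:invariant}; everything else is bookkeeping with the one-dimensional kernel.
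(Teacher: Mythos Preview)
Your proof is correct. The $(1)\Rightarrow(2)$ direction is the same as the paper's. For $(2)\Rightarrow(1)$ you take a genuinely different route: you invoke the separating hyperplane theorem (equivalently, the bipolar theorem) to show that the cone $C$ generated by the columns is all of $\R^d$, and then exploit the one-dimensionality of $\ker M$ to patch the nonnegative kernel vectors $z^{(k)}$ into a strictly positive one. The paper instead gives a purely linear-algebraic argument: after permuting so that $v_1,\dots,v_d$ is a basis, it takes the dual basis $u_1,\dots,u_d$ (so $\langle u_i,v_j\rangle=\delta_{ij}$), notes that each $u_i$ already lies in $\bigcap_{j\le d}H_j^+$, and writes $v_{d+1}=\sum_i\lambda_i v_i$; any $\lambda_i\ge 0$ would put $u_i$ in the full intersection, so all $\lambda_i<0$ and item~(5) of Proposition~\ref{P:invariant} gives orientedness directly. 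Your approach makes the Stiemke/theorem-of-the-alternative structure transparent and generalizes cleanly, while the paper's avoids any appeal to convex separation and stays within elementary linear algebra.
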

\begin{proof}
We use Proposition \ref{P:invariant}. Assume $M$ is oriented. Then by Proposition \ref{P:invariant} there exist positive real numbers $x_i$ such that $\sum_{i=1}^{d+1} x_i v_i=0$.
Thus if $\delta \in \cap_{i=1}^{d+1} H_i^+$, then $\langle v_i, \delta \rangle =0$ for $i=1,\ldots,d+1$, which gives $\delta=0$ since $v_1,\ldots,v_{d+1}$ generate $\R^d$.
Conversely, assume that $\cap_{i=1}^{d+1} H_i^+ =\{0\}$. Since $M$ has maximal rank, we may assume
that $v_1,\ldots,v_d$ form a basis of $\R^d$ permuting the columns of $M$ if necessary.
Take a dual basis, that is, vectors $u_1,\ldots,u_d \in \R^d$ such that $\langle u_i, v_j \rangle=1$ if $i=j$ and $0$ otherwise.
Note that $u_1,\ldots,u_d \in \cap_{i=1}^{d} H_i^+$.
Write $v_{d+1}=\sum_{i=1}^d\lambda_i v_i$ with $\lambda_1,\ldots,\lambda_d \in \R$. If some coefficient $\lambda_i$ is nonnegative, then $\langle u_i, v_{d+1} \rangle \geq 0$, and thus $u_i$ is a non-zero vector in $\cap_{i=1}^{d+1} H_i^+$. It follows that $\lambda_1,\ldots,\lambda_d < 0$, which implies that $M$ is oriented by Proposition \ref{P:invariant}.
 \end{proof}

A $d\times n$ matrix $C$ with columns vectors $v_1,\ldots,v_n \in \R^d$ determines an oriented matroid on $\{1,\ldots,n\}$ with chirotope given by the signs of the maximal minors of $C$. Consider the hyperplane arrangement $\mathcal{H}=\{H_1,\ldots,H_n\}$, where
$H_i=\{x \in \R^d \, , \, \langle v_i,x \rangle =0\}$. Each connected component of the complementary part (chamber for short)  gets a {\it sign vector} $s=(s_1,\ldots,s_n) \in \{\pm 1\}^n$
recording the signs of the linear forms $\langle v_i, \cdot  \rangle$ on that chamber (where as usual $s_i=1$ means that $\langle v_i, \cdot  \rangle$ is nonnegative on the considered chamber).
It turns out that these sign vectors are precisely the covectors of the oriented matroid determined by $C$, see \cite[p. 11]{BLSWZ}.

\begin{proposition}\label{prop:covectors}
A simplicial complex of dimension $d$ with vertices indexed by $\{1,\ldots,n\}$
can be positively decorated if and only if there exists a realizable oriented
matroid of rank $d$ on $\{1,\ldots,n\}$
such that for any $d$-simplex $j_1<\cdots<j_{d+1}$ of the simplicial complex, there is no covector $s \in \{\pm 1\}^n$ of the oriented matroid
which satisfies $s_{i_1}=\cdots=s_{i_{d+1}}=1$.
\end{proposition}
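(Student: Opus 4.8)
The plan is to reduce everything, simplex by simplex, to Proposition~\ref{P:invariantbis} via the dictionary between covectors of a realizable oriented matroid and its central hyperplane arrangement recalled just before the statement. Let $\Gamma$ be a $d$-dimensional simplicial complex with vertices indexed by $\{1,\dots,n\}$. Given a realizable oriented matroid $\mathcal{M}$ of rank $d$ on $\{1,\dots,n\}$, fix a realization, that is, a $d\times n$ matrix $C$ with column vectors $v_1,\dots,v_n$; then the covectors of $\mathcal{M}$ are the sign vectors attached to the faces of the arrangement $\mathcal{H}=\{H_i=\{x:\langle v_i,x\rangle=0\}\}_{1\le i\le n}$, with the convention that the entry $s_i=1$ records the inequality $\langle v_i,\cdot\rangle\ge 0$. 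For a $d$-simplex $\tau=\{j_1<\dots<j_{d+1}\}$ of $\Gamma$, write $H_{j_k}^+=\{x\in\R^d:\langle v_{j_k},x\rangle\ge 0\}$.

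I would first establish the following equivalence for each $d$-simplex $\tau$: (i) $\mathcal{M}$ has no covector $s\in\{\pm1\}^n$ with $s_{j_1}=\dots=s_{j_{d+1}}=1$; (ii) $\bigcap_{k=1}^{d+1}H_{j_k}^+=\{0\}$; (iii) the submatrix $C_\tau$ is oriented. The equivalence (ii)$\Leftrightarrow$(iii) is exactly Proposition~\ref{P:invariantbis}: note that (ii) already forces $v_{j_1},\dots,v_{j_{d+1}}$ to span $\R^d$, hence $C_\tau$ to be of full rank $d$ (otherwise any nonzero vector orthogonal to their span would lie in $\bigcap_k H_{j_k}^+$), so the full-rank hypothesis of that proposition is met; and orientedness forces full rank as well, so the implication goes both ways. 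The equivalence (i)$\Leftrightarrow$(ii) is just a reformulation: under the convention above, the existence of a covector with $s_{j_k}=1$ for all $k$ says precisely that there is a point $x$ with $\langle v_{j_k},x\rangle\ge 0$ for all $k$ (the signs of $\langle v_i,x\rangle$ for $i\notin\tau$ being arbitrary, zeros read as $1$); taking such an $x$ nonzero gives a nonzero element of $\bigcap_k H_{j_k}^+$, and conversely any nonzero element of that cone, read through its signs against all of $v_1,\dots,v_n$, produces such a covector.

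The proposition then follows in both directions. If $(\Gamma,C)$ is a positively decorated simplicial complex, the oriented matroid $\mathcal{M}(C)$ is by definition realizable; it has rank $d$ since each submatrix $C_\tau$ associated to a $d$-simplex of $\Gamma$ is oriented, hence of rank $d$; and by (iii)$\Rightarrow$(i) it has no covector $s\in\{\pm1\}^n$ equal to $1$ on the vertex set of any $d$-simplex of $\Gamma$. Conversely, if a realizable oriented matroid $\mathcal{M}$ of rank $d$ on $\{1,\dots,n\}$ has this covector property, pick a realization $C$; by (i)$\Rightarrow$(iii) every $d\times(d+1)$ submatrix of $C$ corresponding to a $d$-simplex of $\Gamma$ is oriented, so $(\Gamma,C)$ is a positively decorated simplicial complex, i.e.\ $\Gamma$ is positively decorable.

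I expect the only genuinely delicate point to be the precise reading of (i), i.e.\ the bookkeeping of sign vectors of the lower-dimensional faces of $\mathcal{H}$: one has to be careful that a point lying on some hyperplanes $H_i$ (with $i$ possibly inside $\tau$) still gives rise to a legitimate sign vector in $\{\pm1\}^n$ under the convention $s_i=1\Leftrightarrow\langle v_i,\cdot\rangle\ge 0$, and that this is exactly what is needed to match the closed-halfspace condition of Proposition~\ref{P:invariantbis}. Once this translation is pinned down, the rest is a direct transcription of that proposition and of the covector/arrangement dictionary.
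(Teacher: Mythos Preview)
Your proposal is correct and takes essentially the same approach as the paper: the paper's proof is the single line ``This follows from Proposition~\ref{P:invariantbis}'', and your argument is precisely the natural unpacking of that line, making explicit the chain (covector condition)~$\Leftrightarrow$~(half-space condition of Proposition~\ref{P:invariantbis})~$\Leftrightarrow$~(oriented submatrix) for each $d$-simplex. You also correctly single out the one point the paper glosses over, namely the reading of the sign convention so that zero entries count as~$1$; without that reading the equivalence (i)$\Leftrightarrow$(ii) would indeed fail, since a nonzero $x$ lying on some $H_{j_k}$ but in all the closed half-spaces would not correspond to a tope in~$\{\pm1\}^n$.
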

\begin{proof}
This follows from Proposition \ref{P:invariantbis}.
\end{proof}

Proposition \ref{prop:covectors} shows that computational techniques for
classifying oriented matroids may yield a solution to the problem of
decorating simplicial complexes. Another option is to rely on techniques for
low-rank matrix completion with positivity constraints. The following positive variant of
low-rank matrix completion appears in several applicative problems in
compressed sensing, see \emph{e.g.} \cite{The}.

\begin{problem}[Positive matrix completion problem]
Let $p,q,r\in\N$ be three integers with $r\leq \min(p,q)$, and $M$ be a
$p\times q$ non-negative real matrix with missing entries. Complete the matrix with
\emph{positive} real numbers, such that the completed matrix has rank
$r$.
\end{problem}

The next proposition shows that positively decorating a simplicial complex can
be done by solving a positive matrix completion problem.

\begin{proposition}
Let $\Gamma$ be a pure simplicial complex of dimension $d$ on $n$ vertices,
with $d$-simplices $\tau_1,\ldots,\tau_\ell$.
Let $M$ be a $n\times \ell$ nonnegative matrix such that $M_{i,j}>0$ if
and only if $j$ is a vertex of $\tau_i$.
If $M$ has rank $n-d$, then any full rank $d\times n$ matrix $C$ such that $C\cdot M=\mathbf 0$
positively decorates
$\Gamma$.
\end{proposition}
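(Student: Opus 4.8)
The plan is to convert the rank hypothesis on $M$ into the equality $\operatorname{col}(M)=\ker C$, and then to read orientedness of each submatrix $C_{\tau_j}$ directly off the $j$-th column of $M$. First I would observe that $C\cdot M=\mathbf 0$ forces every column of $M$ into $\ker C$, so $\operatorname{col}(M)\subseteq\ker C$; since $C$ has rank $d$ we have $\dim\ker C=n-d=\operatorname{rank}(M)=\dim\operatorname{col}(M)$, hence $\operatorname{col}(M)=\ker C$. Next, for a fixed $d$-simplex $\tau_j$ with vertex set $\{v_0,\dots,v_d\}$, let $m_j$ be the $j$-th column of $M$. By the hypothesis on the support pattern of $M$, the entries of $m_j$ in the positions $v_0,\dots,v_d$ are strictly positive and all others vanish; writing $p\in\R_{>0}^{\,d+1}$ for the restriction of $m_j$ to $\tau_j$, the relation $Cm_j=\mathbf 0$ becomes $C_{\tau_j}\,p=\mathbf 0$. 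Thus $\ker C_{\tau_j}$ contains the strictly positive vector $p$.

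Granting that $C_{\tau_j}$ has full rank $d$, the conclusion is immediate: $\ker C_{\tau_j}$ is then one-dimensional, hence equal to $\R p$, so every nonzero element of $\ker C_{\tau_j}$ has all coordinates nonzero and of the same sign, and by Proposition~\ref{P:invariant} (equivalence of~(1) and~(4)) the matrix $C_{\tau_j}$ is oriented, i.e.\ $C$ positively decorates $\tau_j$; ranging over all $j$ proves the statement. So the whole proof reduces to the claim that $C_{\tau_j}$ has full rank for every $d$-simplex $\tau_j$, and this is the step I expect to be the main obstacle.

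To prove that claim I would argue by contradiction, propagating along the dual graph of $\Gamma$. If $\operatorname{rank}(C_{\tau_j})<d$, choose $y\neq 0$ with $y^{T}C_{\tau_j}=\mathbf 0$ and set $w:=y^{T}C$; then $w\neq\mathbf 0$ (because $C$ has rank $d$), $w$ vanishes on all coordinates indexed by $\tau_j$, and $wM=y^{T}(CM)=\mathbf 0$, so $\sum_{i\in\tau_k}w_i\,(m_k)_i=0$ for every $d$-simplex $\tau_k$ of $\Gamma$. Put $Z=\{i:w_i=0\}$, so that $\tau_j\subseteq Z$. If $\tau_k$ shares a facet with some $d$-simplex contained in $Z$, then $\tau_k$ has exactly one vertex $v^{*}$ outside that facet, its remaining $d$ vertices lie in $Z$, and the relation for $\tau_k$ collapses to $w_{v^{*}}(m_k)_{v^{*}}=0$; since $(m_k)_{v^{*}}>0$ this forces $w_{v^{*}}=0$, i.e.\ $\tau_k\subseteq Z$. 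Hence the set of $d$-simplices contained in $Z$ is nonempty and closed under adjacency in the dual graph; using that the dual graph of $\Gamma$ is connected — a hypothesis one should add here — it follows that every $d$-simplex lies in $Z$, whence $w=\mathbf 0$ because every vertex of $\Gamma$ belongs to some $d$-simplex, a contradiction. Strict positivity of the nonzero entries of $M$ is used precisely at the implication $w_{v^{*}}(m_k)_{v^{*}}=0\Rightarrow w_{v^{*}}=0$, which is why the support pattern of $M$ cannot be realized by an arbitrary matrix.
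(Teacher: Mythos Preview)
Your first two paragraphs contain the paper's entire proof: the $j$-th column of $M$, restricted to the vertices of $\tau_j$, is a strictly positive vector in $\ker C_{\tau_j}$, and the paper concludes from this alone that $C_{\tau_j}$ is oriented. The paper's proof is literally one sentence and does not verify that $C_{\tau_j}$ has full rank $d$, which is needed for condition~(4) of Proposition~\ref{P:invariant} to be equivalent to orientedness.

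You are right to flag this as the main obstacle, and in fact the proposition as stated can fail without the connectivity hypothesis you propose to add. Take $d=2$, $n=5$, and $2$-simplices $\tau_1=\{1,2,3\}$, $\tau_2=\{1,4,5\}$, $\tau_3=\{2,4,5\}$; this is realizable as a geometric complex in $\R^2$ (place $\tau_2,\tau_3$ on either side of the edge $\{4,5\}$ and attach $\tau_1$ off to one side so that it meets $\tau_2$ and $\tau_3$ only at the vertices $1$ and $2$, respectively). With $M$ the $5\times 3$ zero--one incidence matrix one checks $\operatorname{rank}(M)=3=n-d$, yet every full-rank $C$ with $CM=0$ has $C_{\tau_1}$ of rank~$1$, hence not oriented. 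Your propagation argument along the dual graph is correct and repairs the statement under the connectivity assumption; this is strictly more than the paper's proof does.
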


\begin{proof}
  By construction, each $d\times (d+1)$ submatrix of $C$ corresponding to a
  $d$-simplex of $\Gamma$ has 
  a positive vector in its kernel and is therefore oriented.
\end{proof}

\begin{example}
  We continue our running example \ref{ex:C63}, which corresponds to a bipartite
  simplicial complex on $6$ vertices. It has $5$ $d$-simplices, hence decorating this
  complex is equivalent to finding a rank $3$ matrix of size $6\times 5$ which
  has nonzero entries at positions prescribed by the complex. A solution to
  this problem is

  $$\begin{blockarray}{cccccc}
        &A&B&C&E&F\\
	\begin{block}{c[ccccc]}
    	1 & 1 & 4 & 1 & 0 & 0 \\
	2 & 1 & 8 & 3 & 2 & 0 \\
	3 & 3 & 0 & 0 & 3 & 6 \\
	4 & 4 & 4 & 0 & 0 & 4 \\
	5 & 0 & 3 & 3 & 6 & 3 \\
	6 & 0 & 0 & 1 & 3 & 2\\ 
      \end{block}
  \end{blockarray}
  \quad\quad\quad\quad\begin{array}{rcl}
    A&=&[1234]\\
    B&=&[1245]\\
    C&=&[1256]\\
    E&=&[2356]\\
    F&=&[3456]\end{array}$$
  The matrix in Example \ref{ex:C63} is a basis of the left kernel of this
  matrix.
\end{example}

Using this reformulation of the initial problem, we used the software {\tt
NewtonSLRA} \cite{SpSc} to solve the matrix completion problem and to compute a
decoration of $\mathbf S_{11,5}$. This shows that $\Xi_{5,5}\geq 38$, see Appendix
\ref{sec:system38} for the description of the Viro system that was obtained. {\tt
NewtonSLRA} is an iterative
numerical algorithm with local quadratic convergence which can solve low-rank
matrix completion problems. However, this software is not designed to handle positivity constraints so we
randomized the starting point of the iteration and ran it until it converged to
a nonnegative matrix. More specific computational methods would be needed to solve larger problems.

\providecommand{\bysame}{\leavevmode\hbox to3em{\hrulefill}\thinspace}
\providecommand{\MR}{\relax\ifhmode\unskip\space\fi MR }
\providecommand{\MRhref}[2]{%
  \href{http://www.ams.org/mathscinet-getitem?mr=#1}{#2}
}
\providecommand{\href}[2]{#2}

\appendix

\section{Polynomial system with $5$ variables, $5$ equations, $11$ monomials
and $38$ positive solutions}\label{sec:system38}
Let $C$ be the coefficient matrix
 $$C = 
\left[\begin{array}{@{~}c@{~}c@{~}c@{~}c@{~}c@{~}c@{~}c@{~}c@{~}c@{~}c@{~}c@{~}}
    \frac{14036}{26031}&\frac{-29047}{45845}&\frac{22485}{134218}&\frac{-20647}{80496}&\frac{14312}{69515}&\frac{-39015}{127243}&\frac{-6739}{42098}&\frac{19359}{360623}&\frac{16000}{83529}&\frac{1804}{131469}&\frac{4862}{44061}\\[0.5em]
    \frac{19937}{61149}&\frac{-8379}{77942}&\frac{-2105}{18949}&\frac{5635}{122379}&\frac{9229}{59989}&\frac{5391}{113671}&\frac{17593}{33547}&\frac{-50525}{112808}&\frac{-13843}{33458}&\frac{18357}{116882}&\frac{-54686}{132521}\\[0.5em]
    \frac{6391}{94296}&\frac{-3329}{144100}&\frac{7957}{156078}&\frac{-5685}{48451}&\frac{-14459}{74653}&\frac{30218}{245615}&\frac{-12227}{25927}&\frac{49127}{145204}&\frac{-14117}{47609}&\frac{29515}{59658}&\frac{-42328}{83609}\\[0.5em]
    \frac{-12249}{145219}&\frac{-13663}{97873}&\frac{-25831}{90582}&\frac{26287}{33739}&\frac{6818}{23407}&\frac{-14579}{44765}&\frac{-11126}{58889}&\frac{2247}{122770}&\frac{11139}{100537}&\frac{14421}{74818}&\frac{-60016}{644607}\\[0.5em]
  \frac{15984}{47945}&\frac{-22523}{72834}&\frac{-10734}{41165}&\frac{8531}{24837}&\frac{-21257}{47591}&\frac{22017}{37075}&\frac{5346}{284353}&\frac{19757}{194173}&\frac{5740}{83029}&\frac{-62271}{466111}&\frac{5591}{37902}
\end{array}\right].
$$

Then for $t>0$ sufficiently small, the system
$$C\cdot \begin{bmatrix}
1\\
t X_1 X_2 X_3 X_4 X_5\\
t^{2^6} X_1^2 X_2^{2^2} X_3^{2^3} X_4^{2^4} X_5^{2^5}\\
t^{3^6} X_1^3 X_2^{3^2} X_3^{3^3} X_4^{3^4} X_5^{3^5}\\
\vdots\\
t^{10^6} X_1^{10} X_2^{10^2} X_3^{10^3} X_4^{10^4} X_5^{10^5}
\end{bmatrix} = \begin{bmatrix}
  0\\0\\0\\0\\0\end{bmatrix}$$
of $5$ polynomial equations in
$\R[X_1,\ldots, X_5]$
has at least $38$ positive solutions.

\bigskip
\bigskip

\footnotesize
\noindent {\bf Authors' addresses:}

\noindent Fr\'ed\'eric Bihan, Laboratoire de Math\'ematiques, Universit\'e Savoie Mont Blanc, Campus Scientifique,
73376 Le Bourget-du-Lac Cedex, France, {\tt frederic.bihan@univ-savoie.fr}

\smallskip

\noindent Pierre-Jean Spaenlehauer, CARAMEL project, INRIA Grand-Est; Universit\'e de Lorraine; CNRS, UMR 7503; 
LORIA, Nancy, France, {\tt pierre-jean.spaenlehauer@inria.fr}
\end{document}